\documentclass[11pt]{article}

\usepackage{lmodern}
\usepackage[margin=1in]{geometry}
\usepackage{microtype}
\usepackage{amsmath,amssymb,amsthm,mathtools}
\usepackage{mathrsfs}
\usepackage{enumitem}
\usepackage{tikz}
\usetikzlibrary{arrows.meta,calc,matrix,positioning}
\usepackage[colorlinks,linkcolor=blue,citecolor=blue,urlcolor=blue]{hyperref}

\setlist{nosep}
\allowdisplaybreaks

\newtheorem{thm}{Theorem}
\newtheorem{lem}[thm]{Lemma}
\newtheorem{prop}[thm]{Proposition}
\newtheorem{conj}[thm]{Conjecture}
\newtheorem{cor}[thm]{Corollary}
\newtheorem{rem}[thm]{Remark}
\newtheorem{defn}[thm]{Definition}

\DeclareMathOperator{\rk}{rk}
\DeclareMathOperator{\Ind}{Ind}
\DeclareMathOperator{\Aut}{Aut}
\newcommand{\Sn}{\mathfrak{S}_n}
\newcommand{\Sg}[1]{\mathfrak{S}_{#1}}
\newcommand{\Bn}{\mathcal{B}_n}
\newcommand{\B}[1]{\mathcal{B}_{#1}}
\newcommand{\VRep}{\operatorname{VRep}}
\newcommand{\grVRep}{\operatorname{grVRep}}
\newcommand{\cL}{\mathcal{L}}
\newcommand{\M}{\mathcal{M}}
\newcommand{\one}{\mathbf{1}}
\newcommand{\OS}{\mathrm{OS}}
\newcommand{\ch}{\operatorname{ch}}
\newcommand{\chB}{\operatorname{ch}_B}
\newcommand{\thmcite}[2]{\cite[#2]{#1}}

\newcommand{\Sym}{\Lambda}

\begin{document}
	
	\begin{center}
		{\large \bf  Equivariant Kazhdan--Lusztig Polynomials of Thagomizer Matroids \\with a Hyperoctahedral Group Action}
	\end{center}
	
	\begin{center}
		Matthew H. Y. Xie$^{1}$, Philip B. Zhang$^{2}$ and Michael X. X. Zhong$^{3}$\\[6pt]
		
		$^{1,3}$School of Science,\\
		Tianjin University of Technology, Tianjin 300384, P.R. China
		
		$^{2}$College of Mathematical Sciences
		\& Institute of Mathematics and Interdisciplinary Sciences,\\
		Tianjin Normal University, Tianjin 300387, P.R. China\\[6pt]
		
		Email: $^{1}${\tt xie@email.tjut.edu.cn},
		$^{2}${\tt zhang@tjnu.edu.cn},
		$^{3}${\tt zhong.m@tjut.edu.cn}
	\end{center}
	
	\noindent\textbf{Abstract.}
	The thagomizer matroid, realized as the graphic matroid of the complete tripartite graph $K_{1,1,n}$, has full automorphism group isomorphic to the hyperoctahedral group whenever $n \ge 2$. In the equivariant setting for this action, we compute both the Kazhdan--Lusztig polynomial and the inverse Kazhdan--Lusztig polynomial in the sense of Proudfoot's Kazhdan--Lusztig--Stanley theory, and we show that each coefficient is an honest representation with a multiplicity-free irreducible decomposition.
	Our main idea is to exploit the palindromicity of the equivariant $Z$-polynomial, reducing the computation to the already established symmetric-group equivariant Kazhdan--Lusztig theory for the graphic matroids of cycle graphs, and then to apply Proudfoot's equivariant Kazhdan--Lusztig--Stanley inversion identity to obtain the inverse polynomial. Passing to dimensions recovers the previously known nonequivariant thagomizer polynomials, while the coefficient formulas admit a natural expression in terms of the wreath product Frobenius characteristic for the hyperoctahedral group.
	
	\noindent
	\textbf{Keywords:} thagomizer matroid, equivariant Kazhdan–Lusztig polynomial, Frobenius characteristic, hyperoctahedral group

	\noindent
	\textbf{MSC 2020:} 05B35, 05E10, 20C30, 05E05

	\section{Introduction}

	Elias, Proudfoot and Wakefield~\cite{elias2016kazhdan} introduced the Kazhdan--Lusztig polynomial of a matroid, in analogy with the polynomials of Kazhdan and Lusztig~\cite{kazhdan1979representations} for Coxeter groups. 
	They defined, for each matroid $M$, a polynomial $P_M(t)\in\mathbb{Z}[t]$ characterized by a recursion over the lattice of flats together with a rank-dependent degree bound. 
	While the coefficients of $P_M(t)$ are proven to be non-negative \cite{braden2020singular} by Braden, Huh, Matherne, Proudfoot and Wang~\cite{braden2020singular}, explicit computations remain difficult, and structural properties such as unimodality or log-concavity are hard to establish in general. 
	Proudfoot, Xu, and Young~\cite{proudfoot2018zpolynomial} introduced the $Z$-polynomial of a matroid $M$, defined as the rank-weighted sum over flats of the Kazhdan--Lusztig polynomials of the corresponding contractions.
	Braden and Vysogorets~\cite{braden2020deletion} observed that one may alternatively view the palindromicity of $Z$-polynomial as a starting point from which $P_M(t)$ can be recovered.
	Braden, Huh, Matherne, Proudfoot and Wang~\cite{braden2020singular} later constructed an intersection cohomology module $\mathrm{IH}(M)$ and identified the associated $Z$-polynomial of $M$ with its Hilbert series.

	This theory can be refined by equipping $M$ with an action of a group $W$ by matroid automorphisms.
	Gedeon, Proudfoot and Young~\cite{gedeon2017equivariant} defined the \emph{equivariant} Kazhdan--Lusztig polynomial $P_M^W(t)$, in which integral coefficients are lifted to representations of $W$. This equivariant perspective often reveals cleaner structural descriptions than the numerical dimensions alone, exhibiting phenomena such as representation stability and multiplicity-free decompositions; see  for example \cite{gedeon2017equivariant,proudfoot2019quniform, gao2022equivariant,matherne2023equivariant}.
	Gao and Xie~\cite{gao2021inverse} introduced the inverse Kazhdan--Lusztig polynomials to compute the Kazhdan--Lusztig polynomials for uniform matroids. Proudfoot~\cite{proudfoot2021incidence} subsequently generalized this notion by developing the Kazhdan--Lusztig--Stanley theory in an equivariant framework.

	In this paper, we investigate the thagomizer matroid  $T_n$, defined as the graphic matroid of the complete tripartite graph $K_{1,1,n}$. Gedeon~\cite{gedeon2017thagomizer} conjectured a formula for $P_{T_n}^{\Sn}(t)$ in the setting of the symmetric group $\Sn$, which was subsequently proved by Xie and Zhang~\cite{xie2019thagomizer}. However, for $n \ge 2$, the full automorphism group of $T_n$ is the larger hyperoctahedral group $\Bn \cong C_2 \wr \Sn$.
	We determine both the equivariant Kazhdan--Lusztig polynomial $P_{T_n}^{\Bn}(t)$ and the equivariant  inverse Kazhdan--Lusztig polynomial $Q_{T_n}^{\Bn}(t)$ under the action of the hyperoctahedral group. We show that each coefficient is an honest $\Bn$-representation and give  explicit, multiplicity-free decompositions into irreducibles.

	Our first main result gives the coefficient representations of $P_{T_n}^{\Bn}(t)$ explicitly.
	The irreducible representations $V_{\lambda,\mu}$ of $\Bn$ are labeled by bipartitions $(\lambda,\mu)$ of $n$.

	\begin{thm}\label{thm:main}
		
		For all $n\ge 0$, we have in $\grVRep(\Bn)$
		
		\[
		P_{T_n}^{\Bn}(t)
		=
		V_{(n),\varnothing}
		\;+\;
		\sum_{k=1}^{\left\lfloor \frac{n}{2}\right\rfloor}
		\left(\sum_{i=2k}^{n} V_{(n-i),(i-2k+2,2^{k-1})}\right)t^{k},
		\]
		where $(0)=\varnothing$. 
	\end{thm}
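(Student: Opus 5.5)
The plan is to use the characterization of $P^{\Bn}_{T_n}(t)$ through palindromicity of the equivariant $Z$-polynomial, as the abstract suggests. Recall that
\[
Z^{W}_M(t)=\sum_{[F]}\Ind_{W_F}^{W}\bigl(P^{W_F}_{M_F}(t)\bigr)\,t^{\rk F},
\]
summed over $W$-orbits of flats, where $M_F$ is the contraction. $Z^{W}_M(t)$ is palindromic of degree $\rk M$, and $P^W_M$ is the unique class with $\deg P^W_M<\rk M/2$ making this hold. Hence it suffices to show that the proposed right-hand side, substituted for $P_{T_n}^{\Bn}$ (the $F=\emptyset$ term), yields a palindromic $Z$-polynomial of degree $\rk T_n=n+1$. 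Equivalently, it suffices to compute the flat contributions with $F\neq\emptyset$ and check that the low-degree part of the proposed $P$ is exactly what palindromicity forces. I would argue by induction on $n$, where $n=0,1$ are direct checks.

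The first step is to classify the flats of $K_{1,1,n}$ up to the $\Bn$-action. Label the vertices $a,b$ (joined by the spine edge) and $v_1,\dots,v_n$. Each $v_i$ has two edges $av_i$ and $bv_i$, and $\B{n}$ acts by permuting the $v_i$ and swapping the two edges at each $v_i$. A flat corresponds to a partition of the vertex set into connected blocks. There are two cases.

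\emph{Case 1: $a$ and $b$ lie in the same block.} The flat contains the spine, together with some set $S$ of $v_i$ joined to both $a$ and $b$. The other vertices form blocks among themselves, but the $v_i$ are pairwise nonadjacent, so they are singletons. The contraction is then $T_{n-|S|}$, and the stabilizer is $\B{|S|}\times\B{n-|S|}$.

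\emph{Case 2: $a$ and $b$ lie in different blocks.} Here $v_i$ may be attached to $a$, to $b$, or left isolated. The flat is a union of two stars, and its contraction is the graphic matroid of a multigraph: a cycle-like graph on $\{a,b\}$ together with the isolated $v_i$. Concretely, with $m$ isolated vertices, the contraction is a thick edge $ab$ plus $m$ paths $a\!-\!v_i\!-\!b$. After simplification this is the parallel connection structure of $T_m$ with the spine deleted. Instead, the $Z$-polynomial contributions of these flats can be assembled from cycle-graph contractions.

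The key reduction is that flats containing neither the spine nor any complete pair are described by the $\Sn$-equivariant theory of cycle matroids $C_{m}$, whose $P$ and $Z$ are known. The contributions are induced from the subgroups $\B{j}\times\B{k}\times\B{m}$ built from $\Sg{j}$-actions twisted by the sign/trivial $C_2$ characters. Inducing to $\Bn$ is computed with the wreath-product Frobenius characteristic $\chB$, which turns induction into products of symmetric functions in two sets of variables $x,y$. Under $\chB$, the target coefficient $\sum_i V_{(n-i),(i-2k+2,2^{k-1})}$ becomes $\sum_i h_{n-i}(x)\,s_{(i-2k+2,2^{k-1})}(y)$.

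The final step is to equate, via $\chB$, the coefficient of $t^{j}$ and of $t^{n+1-j}$ in $Z$ for $j<(n+1)/2$. The Case 1 terms are handled by the induction hypothesis. The Case 2 terms reduce, by the known equivariant cycle-graph formulas, to Schur functions of shape $(a,2^{b})$ and hooks. I then verify the resulting identity using Pieri rules. I expect this symmetric-function identity to be the main obstacle. To handle it, I would first extract the $x$-degree (i.e.\ the $h_{n-i}(x)$ factor, coming from vertices joined to both poles), which reduces the identity to a one-variable-set statement. That statement should be exactly the palindromicity identity already underlying the $\Sn$-equivariant cycle/thagomizer results of Xie--Zhang. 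Multiplicity-freeness and honesty then follow from the final form, and taking dimensions recovers the known nonequivariant formula as a check.
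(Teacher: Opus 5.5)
There is a genuine gap. Your framework matches the paper's: characterize $P_{T_n}^{\Bn}$ as the unique class of degree $<\frac{n+1}{2}$ making the orbit sum for $Z$ palindromic, induct on $n$, and compute inductions with $\chB$. But the flat analysis that carries the proof is wrong in both cases.

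\emph{Spine in $F$, $|S|=k$.} Contracting $e_*$ merges $a$ and $b$, so each remaining $v_i$ hangs off the merged vertex by a parallel pair. The contraction therefore has Boolean lattice of flats of rank $n-k$; it cannot be $T_{n-k}$, which has rank $n-k+1$. Its $P$ is trivial, and the contribution is $t^{k+1}h_k[X]\,h_{n-k}[X]$, with no induction needed. Your stabilizer $\B{k}\times\B{n-k}$ here is right.

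\emph{Spine not in $F$.} Your own picture (a thick edge $ab$ plus $m$ paths $a\!-\!v_i\!-\!b$) is $T_m$ with its spine fattened to a parallel class. This has the same lattice of flats as $T_m$, so this is where the induction hypothesis enters, not cycle matroids. The orbit structure also differs from what you wrote. The swaps merge all flats with $|I|+|J|=k$ into a single $\Bn$-orbit. Since a swap on a spike meeting $F$ moves $F$, the stabilizer is $\Sg{k}\times\B{n-k}$, not a product of hyperoctahedral groups. The contribution is $t^k h_k[X+Y]\,\mathsf{P}_{n-k}$.

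Consequently the cycle matroids never arise as contractions of $T_n$. They enter only through the form of the candidate $R_n=h_n[X]+t\sum_{k\ge 2}h_{n-k}[X]\,\mathsf{C}_k(Y;t)$, and the step you defer to ``Pieri rules'' is the entire content of the proof. The missing idea is a generating-function identity. Put $\Phi_C=\sum_{m\ge2}\mathsf{C}_m(Y;t)u^{m-1}$, and let $\mathcal{Z}_C$ be the analogous series for $\ch(Z_{C_m}^{\Sg{m}})[Y]$. Then $\sum_n R_n u^{n+1}=uH_X(u)\bigl(1+tu\,\Phi_C\bigr)$. The orbit expansion of $Z_{C_m}$ (flats are subsets of size $\le m-2$, plus $E$) gives $H_Y(tu)\bigl(1+tu\,\Phi_C\bigr)=tu\,\mathcal{Z}_C+1+h_1[Y]\,tu$. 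Hence the candidate $Z$-series is
\[
H_{X+Y}(tu)\sum_{n\ge0}R_n u^{n+1}+tu\,H_X(tu)H_X(u)=u\,H_X(tu)H_X(u)\bigl(tu\,\mathcal{Z}_C(t,u)+1+t+h_1[Y]\,tu\bigr),
\]
which is invariant under $(t,u)\mapsto(t^{-1},tu)$ because $\mathcal{Z}_C$ is. Combined with the degree bound, this forces $\mathsf{P}_n=R_n$. Your idea of splitting off the $X$-part and reducing to a $Y$-only palindromicity is a shadow of this factorization. It cannot be carried out from the orbit contributions you wrote down, since those do not compute $Z_{T_n}^{\Bn}$.
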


	Our second main result concerns the inverse polynomial $Q_{T_n}^{\Bn}(t)$. It extends the symmetric group decomposition obtained by Gao, Li and Xie~\cite{gao2025thagomizer}, and it is proved using Proudfoot's equivariant Kazhdan--Lusztig--Stanley inversion identity.

	\begin{thm}\label{thm:inverse}
		
		For all $n\ge 0$, we have in $\grVRep(\Bn)$
		
		\[
		Q_{T_n}^{\Bn}(t)
		=
		\sum_{k=0}^{\left\lfloor\frac{n}{2}\right\rfloor}
		\left(\sum_{i=2k}^{n} V_{(1^{n-i}),(2^k,1^{i-2k})}\right)t^k.
		\]
		
	\end{thm}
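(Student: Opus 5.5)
We prove Theorem~\ref{thm:inverse} from Theorem~\ref{thm:main} using Proudfoot's equivariant Kazhdan--Lusztig--Stanley inversion identity. For a matroid $M$ with a $W$-action this identity reads
\[
\sum_{[F]\in \cL(M)/W} \Ind_{W_F}^{W}\Bigl( (-1)^{\rk F}\, P^{W_F}_{M|_F}(t)\otimes Q^{W_F}_{M_F}(t)\Bigr)=0 \quad\text{for } \rk M>0,
\]
or an equivalent form with the roles of restriction and contraction exchanged. The first step is to record the $\Bn$-orbits of flats of $T_n=M(K_{1,1,n})$ together with their stabilizers, restrictions and contractions. Write $a,b$ for the two apex vertices and $v_1,\dots,v_n$ for the others. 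A flat corresponds to a set partition of the vertex set; we sort flats by whether $a$ and $b$ lie in the same block. If they do, the block containing $\{a,b\}$ absorbs some set $S$ of the $v_i$, and all other $v_i$ are singletons. If they do not, the blocks of $a$ and $b$ absorb disjoint sets $S_a,S_b$, with $v_i$ joined to $a$ via the edge $av_i$ and to $b$ via $bv_i$.

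In each orbit the restriction is a product of a smaller thagomizer matroid $T_{|S|}$ (or a pair of stars, which are Boolean) and the contraction is again thagomizer-like or Boolean, e.g. the contraction of $\{a,b\}\cup S$ is $T$-type on the remaining vertices. The stabilizers are subgroups $\B{s}\times\B{n-s}$ or $\Sg{p}\times\Sg{q}\times\B{r}$. We need the following inputs for these pieces:
\begin{itemize}
\item for Boolean factors, $P=1$ (trivial representation);
\item for thagomizer factors, $P$ is given by Theorem~\ref{thm:main};
\item for Boolean contractions, $Q$ is the sign-twisted trivial representation.
\end{itemize}
Rather than carry the identity through explicitly, we verify the candidate formula. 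We substitute the conjectured $Q_{T_m}^{\B{m}}$ for all $m<n$, together with the known $P$'s, and show that the identity forces $Q_{T_n}^{\Bn}$ to equal the claimed expression. The identity determines $Q_{T_n}$ uniquely by induction on $n$, so this suffices.

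To compute, we pass to the wreath-product Frobenius characteristic $\chB$ into $\Sym(x)\otimes\Sym(y)$, which sends $V_{\lambda,\mu}$ to $s_\lambda(x)s_\mu(y)$ and turns induction from Young-type subgroups into products. The induction from $\Sg{p}\times\Sg{q}$ embedded in $\B{p+q}$ requires the standard plethystic rule $h_p\mapsto h_p[x+y]$-type substitutions. Summing over orbits then becomes a convolution of generating functions in an auxiliary variable $u$ marking $n$. The generating function of the conjectured $Q$'s is
\[
\sum_{n}\sum_k\sum_i e_{n-i}(x)\, s_{(2^k,1^{i-2k})}(y)\, t^k u^n ,
\]
and that of the $P$'s is the analogous series with $h_{n-i}(x)\,s_{(i-2k+2,2^{k-1})}(y)$. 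The vanishing then reduces to a symmetric-function identity. We factor out the $x$-parts, which behave like $\sum h_j u^j$ and $\sum e_j(-u)^j$ and cancel by $H(u)E(-u)=1$. What remains is a $y$-identity involving two-column and "hook plus twos" Schur functions. This $y$-identity is essentially the $\Sn$-level identity established by Gao--Li--Xie for the inverse thagomizer polynomial, now in $y$ variables.

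The main obstacle is this final symmetric-function verification, together with the bookkeeping of signs and degree shifts from $\rk F$. The delicate cases are the orbits with $a,b$ in separate blocks, since their stabilizers are not of the form $\B{}\times\B{}$. For the $y$-identity itself, we would first try Pieri-rule telescoping: the sum $\sum_i s_{(2^k,1^{i-2k})}$ is a column-Pieri product of $s_{(2^k)}$ with an $e$-series. Multiplicity-freeness and honesty of $Q$ are then read off directly from the final formula.
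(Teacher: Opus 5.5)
Your overall plan matches the paper's proof. It uses Proudfoot's inversion identity summed over the two $\Bn$-orbit families of flats, the wreath-product characteristic $\chB$, and generating series in $u$ in which the $X$-parts cancel via $H_X(u)E_X(-u)=1$. It then finishes with the alternating Pieri identity of Gao--Li--Xie, $\sum_{j}(-1)^j e_{m-j}[Y]\,s_{(j+2,2^{k-1})}[Y]=s_{(2^k,1^m)}[Y]$. Using the mirrored form of the identity ($P$ on restrictions, $Q$ on contractions) is harmless. Done correctly, it yields the same convolution $\mathsf{Q}_n=\sum_i(-1)^i e_{n-i}[2X+Y]\,\mathsf{P}_i$ that the paper derives.

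The gap is in the orbit analysis, exactly in the case you call delicate.

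\textbf{The orbits.} For flats not containing the spine, the pair $(|S_A|,|S_B|)$ is not a $\Bn$-invariant: the swap $a_i\leftrightarrow b_i$ moves vertex $i$ from the block of $A$ to the block of $B$. So all such flats of rank $k$ form a single orbit, represented by $\{a_1,\dots,a_k\}$, with stabilizer $\Sg{k}\times\B{n-k}$. Your $\Sg{p}\times\Sg{q}\times\B{r}$ is a proper subgroup of the true stabilizer. For $p,q\ge 1$, transposing spikes $1$ and $p+1$ and swapping inside both fixes $\{a_1,\dots,a_p,b_{p+1},\dots,b_{p+q}\}$, but this element is not in $\Sg{p}\times\Sg{q}\times\B{r}$.

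\textbf{Why the computation fails.} Summing over $(p,q)$-orbits with those stabilizers produces the factor $\sum_{p+q=k}h_p[X+Y]\,h_q[X+Y]=h_k[2X+2Y]$ instead of $h_k[X+Y]$. The verification then fails already at $n=1$: it gives $\mathsf{Q}_1=2(h_1[X]+h_1[Y])$, of dimension $4$. But $T_1=U_{2,3}$ has inverse polynomial $2$, and the claimed answer is $e_1[X]+e_1[Y]$.

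\textbf{The fix.} The correct contribution is $(-1)^k h_k[X+Y]\,\mathsf{Q}_{n-k}$. This follows from $\chB\bigl(\Ind_{\Sg{k}\times\B{n-k}}^{\Bn}(U\boxtimes V)\bigr)=\ch(U)[X+Y]\,\chB(V)$. It also uses the fact that the contraction is $T_{n-k}$ with its spine replaced by a parallel class, and $\Sg{k}$ acts trivially on its lattice of flats.

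\textbf{Two smaller slips.} First, for $F\ni e_*$ it is the restriction that is $T_{|S|}$ and the contraction that is Boolean; your example reverses them. Second, the $Q$ of that Boolean contraction must be $V_{(1^{n-k}),\varnothing}$, the pullback of $\mathrm{sgn}_{\Sg{n-k}}$, with $\chB=e_{n-k}[X]$. The Coxeter sign character has $\chB=e_{n-k}[Y]$, and using it would give the wrong value $\mathsf{Q}_1=2e_1[Y]$.
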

	
	Instead of using the defining recursion directly, we determine $P_{T_n}^{\Bn}(t)$ via the equivariant $Z$-polynomial. Specifically, we establish the palindromicity of the $Z$-polynomial for $T_n$ by relating it to the known $\Sg{k}$-equivariant case of the graphic matroids $C_k$ of the cycle graph of rank $k-1$~\cite{proudfoot2016intersection}.
	

	In both Theorem~\ref{thm:main} and Theorem~\ref{thm:inverse}, the coefficients are honest $\Bn$-representations rather than merely virtual ones, and their irreducible decompositions are multiplicity-free. Taking dimensions recovers the non-equivariant polynomials computed by Gedeon~\cite{gedeon2017thagomizer} and by Gao, Li and Xie~\cite{gao2025thagomizer}. In particular, the multiplicity-free decompositions give positive formulas for the numerical coefficients by summing the dimensions of the corresponding irreducible representations, which is equivalent to counting the standard Young bitableaux of the indicated shapes.

	Gao, Li, Xie, Yang and Zhang~\cite{gao2026induced} introduced the notion of induced log-concavity for sequences of representations, extending the equivariant log-concavity framework of Gedeon--Proudfoot--Young~\cite{gedeon2017equivariant}. Notably, they established this property for the equivariant Kazhdan--Lusztig polynomials of uniform matroids. Motivated by the fact that thagomizers also exhibit multiplicity-free coefficient representations, we expect analogous positivity here. We record specific induced log-concavity conjectures in Section~\ref{sec:ilc}.
	
	We also compute the $\Bn$-equivariant characteristic polynomial of $T_n$ in Appendix~\ref{app:charpoly}. Although this computation is not used in the $Z$-palindromicity argument, we include it as a convenient reference and for comparison with the corresponding type-$A$ computations.
	
	
	The rest of the paper is organized as follows. Section~\ref{sec:preliminaries} reviews background on Frobenius characteristics and equivariant matroid invariants. 
	In Section~\ref{sec:thagomizer}, we describe the action of $\Bn$ on $T_n$ and classify $\Bn$-orbits of flats. 
	Section~\ref{sec:cycle} gives basic computation of generating functions of equivariant Kazhdan--Lusztig polynomials and $Z$-polynomials for the matroid $C_k$.
	Section~\ref{sec:z-comparison} derives $P_{T_n}^{\Bn}(t)$ by comparing the $\Bn$-equivariant $Z$-polynomial of $T_n$ with the corresponding terms coming from the matroids $C_k$. 
	Section~\ref{sec:inverse} deduces $Q_{T_n}^{\Bn}(t)$ using Proudfoot's inversion identity. 
	Section~\ref{sec:ilc} records induced log-concavity conjectures. The appendix computes the $\Bn$-equivariant characteristic polynomial of $T_n$.

	\section{Preliminaries}\label{sec:preliminaries}
	
	\subsection{Frobenius characteristic and plethystic notation}\label{sec:frob-pleth}
	
	Let $\Sym$ be the ring of symmetric functions. We write $\Sym_n$ for its homogeneous component of degree~$n$.
	Let $X=\{x_1,x_2,x_3,\ldots\}$ and $Y=\{y_1,y_2,y_3,\ldots\}$ be two independent alphabets.
	The notation $\Sym[X,Y]_n$ is used for the degree-$n$ component of $\Sym[X,Y]$ with respect to total degree.
	The subscript is often suppressed when the degree is clear from the context.

	Following Haglund~\cite{haglund2008q}, we use plethystic substitution.
	Let $E=E(t_1,t_2,t_3,\ldots)$ be a formal series of rational functions in commuting parameters.
	For $k\ge 1$, the plethystic substitution of $E$ into the power sum $p_k$ is defined by
	\[
	p_k[E]:=E(t_1^k,t_2^k,\ldots).
	\]
	For a symmetric function $f=\sum_{\lambda} c_{\lambda}p_{\lambda}$ expressed in the power-sum basis, define
	\[
	f[E]:=\sum_{\lambda} c_{\lambda}\prod_i p_{\lambda_i}[E].
	\]
	In particular, writing $X=x_1+x_2+\cdots$, we have $p_k[X]=\sum_i x_i^k$ and hence $f[X]=f(x_1,x_2,\ldots)$ for any $f\in\Sym$.
	Moreover $p_k[-X]=-p_k[X]$.
	
	\begin{lem}[\thmcite{haglund2008q}{Theorem~1.27}]\label{lem:haglund-1}
		Let $E=E(t_1,t_2,t_3,\ldots)$ and $F=F(w_1,w_2,w_3,\ldots)$ be formal series of rational functions in commuting parameters.
		Then for all $n\ge 0$,
		\begin{align*}
			h_{n}[E+F]&=\sum_{j=0}^{n}h_{j}[E]h_{n-j}[F],\\
			h_{n}[E-F]&=\sum_{j=0}^{n}(-1)^{n-j}h_{j}[E]e_{n-j}[F].
		\end{align*}
	\end{lem}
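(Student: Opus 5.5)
The plan is to derive both identities from the generating function of $h_n$ and the way plethysm interacts with sums and negation. Recall the standard generating series $H(z)=\sum_{n\ge0}h_n z^n=\prod_i(1-x_iz)^{-1}=\exp\bigl(\sum_{k\ge1}p_k z^k/k\bigr)$. Since $p_k[E]$ is defined by $t_j\mapsto t_j^k$, applying plethysm coefficientwise gives
\[
\sum_{n\ge0}h_n[E]z^n=\exp\Bigl(\sum_{k\ge1}\frac{p_k[E]\,z^k}{k}\Bigr).
\]
This uses that $f\mapsto f[E]$ is a ring homomorphism, which holds because it is defined on the power-sum basis multiplicatively. The homomorphism can be applied to the exponential series degree by degree, since each $z^n$-coefficient is a polynomial in finitely many $p_k$.

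For the first identity, note that $p_k[E+F]=p_k[E]+p_k[F]$. This holds because the alphabets are independent parameters, and raising every variable to the $k$th power respects sums. Then
\[
\exp\Bigl(\sum_k\frac{(p_k[E]+p_k[F])z^k}{k}\Bigr)=\Bigl(\sum_j h_j[E]z^j\Bigr)\Bigl(\sum_m h_m[F]z^m\Bigr).
\]
Extracting the coefficient of $z^n$ gives the first formula.

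For the second identity, $p_k[E-F]=p_k[E]-p_k[F]$, so $\sum_n h_n[E-F]z^n=H_E(z)\cdot\exp\bigl(-\sum_k p_k[F]z^k/k\bigr)$. The remaining step is to identify the second factor. From $E(z)=\sum e_nz^n=\prod(1+x_iz)=\exp\bigl(\sum_k(-1)^{k-1}p_kz^k/k\bigr)$ we get $E(-z)=\exp\bigl(-\sum_k p_kz^k/k\bigr)=H(z)^{-1}$. Applying the plethystic homomorphism to this identity of formal series in $z$ shows that $\exp\bigl(-\sum_k p_k[F]z^k/k\bigr)=\sum_m(-1)^m e_m[F]z^m$. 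Multiplying by $\sum_j h_j[E]z^j$ and taking the coefficient of $z^n$ gives $\sum_j(-1)^{n-j}h_j[E]e_{n-j}[F]$.

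The only delicate point is justifying these manipulations formally. Specifically, one needs that plethysm is a well-defined ring homomorphism $\Sym\to$ (series in the parameters) that is additive in $E$ at the level of each $p_k$. One also needs that all exponentials make sense as formal power series in $z$. Both facts follow directly from the definitions recalled above, so no substantive obstacle is expected.
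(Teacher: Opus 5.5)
Your proof is correct. Plethysm $f\mapsto f[E]$ is a ring homomorphism, since the $p_\lambda$ form a $\mathbb{Q}$-basis and $p_\lambda p_\mu=p_{\lambda\cup\mu}$; apply it coefficientwise to $\sum_n h_nz^n=\exp\bigl(\sum_k p_kz^k/k\bigr)$ and to $\sum_m(-1)^me_mz^m=\exp\bigl(-\sum_k p_kz^k/k\bigr)$, and combine with $p_k[E\pm F]=p_k[E]\pm p_k[F]$, and both identities follow.

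The paper gives no argument of its own here and simply quotes the statement from Haglund's book, so your self-contained generating-function derivation is an adequate substitute for that citation. One small correction to your reasoning: additivity of $p_k[\cdot]$ holds because $t_i\mapsto t_i^k$ is a ring endomorphism, not because $E$ and $F$ use disjoint sets of parameters.
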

	
	For an alphabet $A$, write $H_A(u):=\sum_{n\ge 0} h_n[A]u^n$.
	Specializing Lemma~\ref{lem:haglund-1} to $E=X$ and $F=Y$ gives
	\begin{equation}\label{eq:HXHY}
		H_{X+Y}(u)=H_X(u)H_Y(u).
	\end{equation}
	After applying the Frobenius characteristic, orbit sums become products, and~\eqref{eq:HXHY} is used to simplify the resulting generating series.
	
	Let $W$ be a finite group. We write $\VRep(W)$ for the abelian group generated by isomorphism classes of finite-dimensional complex $W$-representations. We will use two related Frobenius characteristics, which provide a bridge between representation theory and symmetric functions: the usual characteristic for $\Sn$ and a wreath product characteristic for $\Bn=C_2\wr \Sn$.

	For each $n$, let $\ch:\VRep(\Sn)\to \Sym_n$ be the usual Frobenius characteristic~\cite[\S I.7]{macdonald1995symmetric}, characterized by the property that
	\[
	\ch(V_\lambda)=s_\lambda,
	\]
	where $V_\lambda$ denotes the irreducible $\Sn$-representation indexed by the partition $\lambda$ of $n$, and $s_\lambda$ denotes the corresponding Schur function in $\Sym_n$.
	It satisfies $\ch(\one_{\Sn})=h_n$ and
	\begin{equation}\label{eq:frob-sn-ind}
		\ch\!\left(\Ind_{\Sg{a}\times \Sg{b}}^{\Sn}(V\boxtimes W)\right)=\ch(V)\,\ch(W)
		\qquad(a+b=n).
	\end{equation}
	
	Write $\Bn=C_2\wr \Sn$ for the hyperoctahedral group.
	Irreducible complex representations of~$\Bn$ are indexed by bipartitions $(\lambda,\mu)$ of $n$. We write $V_{\lambda,\mu}$ for the corresponding irreducible representation.
	Following Macdonald~\cite[Chapter~I, Appendix~B]{macdonald1995symmetric} and the subsequent treatments in~\cite{macdonald1980polynomial, mendes2004lambdaring}, we use the wreath product (or $\lambda$-ring) Frobenius characteristic, namely the $\mathbb{Z}$-linear isomorphism
	\[
	\chB:\VRep(\Bn)\longrightarrow \Sym[X,Y]_n,
	\]
	defined on irreducibles by
	\begin{align}\label{def:chB}
		\chB(V_{\lambda,\mu})=s_\lambda[X]\,s_\mu[Y].
	\end{align}
	This characteristic map for hyperoctahedral groups also appears in other Schur-positivity contexts; see, for example, Athanasiadis~\cite[\S2]{athanasiadis2020some} in the setting of equivariant $\gamma$-positivity.
	In particular, $\chB(\one_{\B{n}})=h_n[X]$.
	We use the standard compatibility of $\chB$ with parabolic induction:
	\begin{equation}\label{eq:frob-bn-ind}
		\chB\!\left(\Ind_{\B{a}\times \B{b}}^{\B{n}}(V\boxtimes W)\right)
		= \chB(V)\,\chB(W)
	\end{equation}
	for $a+b=n$, and also use the compatibility with induction from $\Sn$ (embedded in $\Bn$ as the all-positive signed permutations):
	\begin{equation}\label{eq:frob-sn-to-bn}
		\chB\!\left(\Ind_{\Sn}^{\Bn} V\right)
		= \ch(V)[X+Y].
	\end{equation}
	\begin{lem}\label{lem:ind-sk-bn-k}
		Let $a,b\ge 0$ with $a+b=n$.
		Let $\B{a}\times \B{b}$ be the standard Young subgroup of $\Bn$, and view $\Sg{a}$ as the subgroup of $\B{a}$.
		Thus $\Sg{a}\times \B{b}$ is a subgroup of $\Bn$, well-defined up to conjugacy.
		Then for $U\in \VRep(\Sg{a})$ and $V\in \VRep(\B{b})$
		\[
		\chB\!\left(\Ind_{\Sg{a}\times \B{b}}^{\Bn}(U\boxtimes V)\right)
		=
		\ch(U)[X+Y]\cdot \chB(V).
		\]
	\end{lem}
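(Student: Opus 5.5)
We factor the induction through the intermediate subgroup $\B{a}\times\B{b}$, using transitivity of induction:
\[
\Ind_{\Sg{a}\times \B{b}}^{\Bn}(U\boxtimes V)
\cong
\Ind_{\B{a}\times \B{b}}^{\Bn}\Bigl(\Ind_{\Sg{a}\times \B{b}}^{\B{a}\times\B{b}}(U\boxtimes V)\Bigr).
\]
The inner induction only enlarges the first factor. For a product group $G_1\times G_2$ with subgroup $H_1\times G_2$, there is a natural isomorphism
\[
\Ind_{H_1\times G_2}^{G_1\times G_2}(U\boxtimes V)\cong \Ind_{H_1}^{G_1}(U)\boxtimes V.
\]
To see this, note that $\mathbb{C}[G_1\times G_2]=\mathbb{C}[G_1]\otimes\mathbb{C}[G_2]$, and so
\[
\mathbb{C}[G_1\times G_2]\otimes_{\mathbb{C}[H_1\times G_2]}(U\otimes V)\cong(\mathbb{C}[G_1]\otimes_{\mathbb{C}[H_1]}U)\otimes V.
\]
Alternatively, one can check that the induced characters agree. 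Hence the inner induction equals $\Ind_{\Sg{a}}^{\B{a}}(U)\boxtimes V$.

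Next we apply the parabolic induction compatibility \eqref{eq:frob-bn-ind} to the outer induction. This gives
\[
\chB\!\left(\Ind_{\Sg{a}\times \B{b}}^{\Bn}(U\boxtimes V)\right)=\chB\!\left(\Ind_{\Sg{a}}^{\B{a}}U\right)\cdot\chB(V).
\]
Finally, \eqref{eq:frob-sn-to-bn} with $n$ replaced by $a$ identifies $\chB(\Ind_{\Sg{a}}^{\B{a}}U)$ with $\ch(U)[X+Y]$. Substituting this yields the claimed formula.

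The only point requiring care is that the embeddings are compatible. The copy of $\Sg{a}$ inside $\B{a}$ must be the all-positive signed permutations, so that \eqref{eq:frob-sn-to-bn} applies. The subgroup $\Sg{a}\times\B{b}$ must genuinely sit inside the standard Young subgroup $\B{a}\times\B{b}$, which holds by construction. Changing the subgroup by conjugation does not change the induced representation up to isomorphism, which justifies the phrase ``well-defined up to conjugacy.'' Everything else is formal, so I expect no real obstacle beyond stating the product-induction isomorphism cleanly. Both sides are also $\mathbb{Z}$-linear in $U$ and $V$, so it suffices to verify the identity for honest representations; indeed the argument above already works for virtual ones.
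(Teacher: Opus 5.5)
Your proposal is correct and follows essentially the same route as the paper's proof. Both factor the induction through the standard Young subgroup $\B{a}\times\B{b}$, identify the inner induction with $\Ind_{\Sg{a}}^{\B{a}}U\boxtimes V$, and then apply the parabolic-induction rule followed by the $\Sg{a}$-to-$\B{a}$ induction rule; your explicit proof of the product-induction isomorphism just fills in a step the paper folds into ``transitivity of induction.''
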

	\begin{proof}
		We may restrict to the standard inclusions
		\(
		\Sg{a}\times \B{b}\subseteq \B{a}\times \B{b}\subseteq \Bn,
		\)
		since any conjugate inclusion gives rise to a canonically isomorphic induced representation. By transitivity of induction, we obtain
		\[
		\Ind_{\Sg{a}\times \B{b}}^{\Bn}(U\boxtimes V)
		\cong
		\Ind_{\B{a}\times \B{b}}^{\Bn}\!\Bigl(\Ind_{\Sg{a}}^{\B{a}}U\boxtimes V\Bigr).
		\]
		Applying $\chB$ and using~\eqref{eq:frob-bn-ind}, we deduce that
		\[
		\chB\!\left(\Ind_{\Sg{a}\times \B{b}}^{\Bn}(U\boxtimes V)\right)
		=
		\chB\!\left(\Ind_{\Sg{a}}^{\B{a}}U\right)\cdot \chB(V).
		\]
		Finally, applying~\eqref{eq:frob-sn-to-bn} with $n=a$, we conclude that
		\(
		\chB\!\left(\Ind_{\Sg{a}}^{\B{a}}U\right)=\ch(U)[X+Y].
		\)
	\end{proof}
	
	\subsection{Equivariant Kazhdan--Lusztig and \texorpdfstring{$Z$}{Z} polynomials}\label{sec:ekl-z}
	
	Throughout this paper, we work with loopless matroids. Let $W$ be a finite group acting by automorphisms on a matroid~$\M$, and write $\grVRep(W)=\VRep(W)\otimes_{\mathbb{Z}}\mathbb{Z}[t]$.

	We fix a convention for minors that will be used throughout.
	For a subset $S$ of the ground set of a matroid $\M$, we write $\M|_S$ for the restriction of $\M$ to $S$.
	For a flat $F$, we write $\M/F$ for the contraction.
	Some papers use other notations for these minors (for example, writing $\M_F$ for a restriction/localization and $\M^F$ for a contraction).
	We use the following definition of equivariant Kazhdan--Lusztig and $Z$-polynomials due to Ferroni--Matherne--Vecchi~\cite{ferroni2025deletion}.

	\begin{defn}[\thmcite{ferroni2025deletion}{Theorem--Definition~2.2}]\label{def:equivariant-kl-z-polynomials}
		There is a unique way of associating to each loopless matroid $\M$ and each group action $W\curvearrowright \M$, two graded virtual representations $P_\M^W(t), Z_\M^W(t) \in \grVRep(W)$ in such a way that
		\begin{enumerate}[label={(\roman*)},font=\normalfont]
			\item If $\rk(\M) = 0$, then $P_\M^W(t) = Z_\M^W(t) = \one_W$.
			\item If $\rk(\M) > 0$, then $\deg P_\M^W(t) < \frac{1}{2}\rk(\M)$.
			\item For every $\M$,
			\[
			Z_\M^W(t) \;=\; \sum_{[F] \in \cL(\M)/W} t^{\rk(F)} \Ind_{W_F}^W P_{\M/F}^{W_F}(t)
			\]
			is palindromic and has degree $\rk(\M)$.
		\end{enumerate}
		Here $\one_W$ is the trivial representation\footnote{In previous related work, the trivial representation was denoted by $\tau_W$. We use $\one_W$ to avoid confusion with other $\tau$-notations.} of $W$, $\cL(\M)/W$ denotes the quotient of the lattice of flats by the action of $W$, and $W_F$ is the stabilizer of the flat $F$. 
	\end{defn}

	Definition~\ref{def:equivariant-kl-z-polynomials} is formulated entirely in terms of the lattice of flats  together with the induced actions on contractions.
	In particular, the equivariant Kazhdan--Lusztig and $Z$ polynomials depend only on the $W$-action on $\cL(\M)$.

	We will use the following special case in which the degree bound and $Z$-palindromicity force the Kazhdan--Lusztig polynomial to be trivial.
	
	\begin{lem}[\thmcite{gedeon2017equivariant}{Corollary~2.10}]\label{lem:boolean}
		Let $\M$ be a loopless matroid of rank $r$ whose lattice of flats~$\cL(\M)$ is a Boolean lattice.
		Then for any action $W\curvearrowright \M$ we have
		\[
		P_\M^W(t)=\one_W\in \VRep(W)\subseteq \grVRep(W).
		\]
	\end{lem}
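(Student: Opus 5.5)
We argue by strong induction on the rank $r$ of $\M$, using the uniqueness part of Definition~\ref{def:equivariant-kl-z-polynomials}. It therefore suffices to show that assigning $P_{\M'}^{W'}(t)=\one_{W'}$ to every Boolean matroid $\M'$ with any action $W'$ satisfies conditions (i)--(iii). Condition (i) holds by convention. Condition (ii) holds because $\deg \one_W=0<\tfrac12 r$ when $r>0$.

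For the inductive step, note that if $\cL(\M)$ is Boolean of rank $r$, then for every flat $F$ the lattice $\cL(\M/F)$ is the upper interval $[F,\hat 1]$. This interval is again Boolean, of rank $r-\rk(F)$. By the inductive hypothesis (or by (i) when $F$ is the top flat), $P_{\M/F}^{W_F}(t)=\one_{W_F}$ for every proper flat. Substituting this into (iii) gives
\[
Z_\M^W(t)=\sum_{[F]\in\cL(\M)/W} t^{\rk F}\,\Ind_{W_F}^W \one_{W_F}+P_\M^W(t),
\]
where the last term comes from the flat $F=\varnothing$ (rank $0$, stabilizer $W$).

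The key identification is that $\bigoplus_{[F],\,\rk F=i}\Ind_{W_F}^W\one_{W_F}$ is the permutation representation $\mathbb{C}[\cL(\M)_i]$ on the flats of rank $i$. Since $\cL(\M)\cong 2^{[r]}$ via atoms, and $W$ acts through its action on the set $A$ of $r$ atoms, this is the permutation representation on the $i$-subsets of $A$. Complementation $S\mapsto A\setminus S$ is a $W$-equivariant bijection between $i$-subsets and $(r-i)$-subsets. Hence $\mathbb{C}[\cL_i]\cong\mathbb{C}[\cL_{r-i}]$ as $W$-representations, and the sum $\sum_{i=0}^r t^i\,\mathbb{C}[\cL_i]$ is palindromic of degree $r$.

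Now take $P_\M^W=\one_W$. Then the $F=\varnothing$ contribution is $\one_W=\mathbb{C}[\cL_0]$, so $Z_\M^W(t)=\sum_{i=0}^r t^i\,\mathbb{C}[\cL_i]$. This is palindromic of degree $r$, because the top flat contributes $t^r\one_W$. Thus (iii) holds, and uniqueness forces $P_\M^W=\one_W$.

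The only delicate point is making sure the $W$-action on the Boolean lattice is induced by a permutation of its atoms, so that complementation is $W$-equivariant. This holds because any lattice automorphism permutes atoms and is determined by that permutation, since each flat is the join of the atoms below it.
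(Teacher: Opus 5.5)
Your proof is correct and is essentially the argument the paper has in mind: the paper cites Gedeon--Proudfoot--Young and describes the lemma as the case where the degree bound together with $Z$-palindromicity forces $P_{\M}^{W}(t)=\one_W$, and your $W$-equivariant complementation on atoms, $\mathbb{C}[\cL(\M)_i]\cong\mathbb{C}[\cL(\M)_{r-i}]$, is exactly what makes $\sum_i t^i\,\mathbb{C}[\cL(\M)_i]$ palindromic. Only tidy the notation: by ``proper'' you mean $F\neq\varnothing$, and the orbit sum in your display should run over nonempty flats so that the $F=\varnothing$ term is not counted twice.
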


	\medskip
	
	The $\Sn$-equivariant Kazhdan--Lusztig polynomials of $T_n$ were computed by Xie--Zhang~\cite{xie2019thagomizer}.
	We record their closed formula in symmetric-function form.
	This result is not used in our proof of Theorem~\ref{thm:main}.
	
	\begin{prop}[\cite{xie2019thagomizer}]\label{prop:xz}
		For $n\ge 0$, write $\mathsf{P}_n^{A}(X;t):=\ch\!\left(P_{T_n}^{\Sn}(t)\right)[X]\in \Sym[X][t]$ and $\mathsf{C}_k(X;t):=\ch\!\left(P_{C_k}^{\Sg{k}}(t)\right)[X]$ for $k\ge 2$.
		Then
		\[
		\mathsf{P}_n^{A}(X;t)=h_n[X]+t\sum_{k=2}^{n} h_{n-k}[X]\,\mathsf{C}_k(X;t).
		\]
	\end{prop}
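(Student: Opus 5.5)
The plan is to prove the statement by computing $P_{T_n}^{\Sn}(t)$ directly from Definition~\ref{def:equivariant-kl-z-polynomials}, specialized to $W=\Sn$ acting on $T_n$, and arguing by induction on $n$. Label the vertices of $K_{1,1,n}$ as $a,b,1,\dots,n$. The ground set then consists of the edge $e=ab$ together with the edges $a i$ and $b i$ for $1\le i\le n$, and the rank of $T_n$ is $n+1$.

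The first step is to classify the flats up to the $\Sn$-action.
\begin{itemize}
\item A flat of $T_n$ corresponds to a partition of the vertex set into connected blocks.
\item If $a$ and $b$ lie in the same block, that block contains $\{a,b\}\cup S$ for some $S\subseteq[n]$, and the remaining vertices are singletons. Such a flat has rank $|S|+1$, and its contraction is $T_{n-|S|}$ with vertex set $\{ab\}\cup([n]\setminus S)$.
\item If $a$ and $b$ lie in different blocks, the blocks are $\{a\}\cup A$ and $\{b\}\cup B$ with $A\cap B=\varnothing$, plus singletons. The rank is $|A|+|B|$. The contraction is the graphic matroid of the multigraph on the merged vertices $a',b'$ and the remaining $m=n-|A|-|B|$ vertices. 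After simplification this is $K_{1,1,m}$, but each parallel class of the $a'b'$ edge now contains $|A||B|+1$ edges.
\end{itemize}
The lattice of flats ignores parallel classes, so the contraction in the second case again has lattice $\cL(T_m)$. Its stabilizer is $\Sg{|A|}\times\Sg{|B|}\times\Sg{m}$, which acts on this lattice through the factor $\Sg{m}$.

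The second step writes $Z_{T_n}^{\Sn}(t)$ as an induced sum over these orbits using Lemma~\ref{lem:ind-sk-bn-k}-style Frobenius products, here just \eqref{eq:frob-sn-ind}. In symmetric-function form this gives
\[
\mathsf Z_n=\sum_{s}t^{s+1}h_s\,\mathsf P^A_{n-s}+\sum_{\alpha,\beta}t^{\alpha+\beta}h_\alpha h_\beta\,\mathsf P^A_{n-\alpha-\beta},
\]
with small care for degenerate cases such as $A=B=\varnothing$, which is the bottom flat. Passing to generating functions in $u$ turns this into
\[
\mathcal Z=\bigl(tH(tu)+H(tu)^2\bigr)\,\mathcal P,
\]
where $\mathcal P=\sum_n \mathsf P^A_n u^n$.

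The third step is where the cycle matroids enter. The claimed formula involves $\mathsf C_k$, which satisfies its own $Z$-relation from the orbits of flats of $C_k$; these are set partitions, and the contractions of $C_k$ are again cycles. I would insert the candidate right-hand side into $\mathcal Z$ and check two things: that the result is palindromic of degree $n+1$, and that the candidate satisfies the degree bound $\deg<\tfrac{n+1}{2}$. The degree bound follows from $\deg \mathsf C_k<(k-1)/2$. Uniqueness in Definition~\ref{def:equivariant-kl-z-polynomials} then gives the formula.

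The main obstacle is the palindromicity check. The cleanest route is to recognize that the candidate $\mathcal Z$ factors as $H(u)$-type factors, which are palindromic after the substitution $u\mapsto$ suitable and $t\mapsto t^{-1}$, times the known palindromic generating function of $Z_{C_k}$. This requires matching $t\,h_{n-k}\mathsf C_k$ against the flats of $T_n$ containing $e$ together with the cycle formed by a contracted $a$–$b$ path. If this direct identification is too messy, I would instead verify palindromicity coefficientwise via Lemma~\ref{lem:haglund-1} on the exponential generating series.
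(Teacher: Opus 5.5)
\textbf{There is a genuine gap: the contraction at the spine flats is misidentified, which makes your $Z$-identity false and the palindromicity check impossible.}

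Your overall strategy is sound: write $Z_{T_n}^{\Sn}$ as an orbit sum, insert the candidate, check palindromicity and the degree bound, and conclude by induction. It is the $\Sn$-analogue of the paper's proof of Theorem~\ref{thm:main}; the paper itself only quotes this proposition from Xie--Zhang and does not prove it.

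Take a flat $F=\{e\}\cup\{ai,\,bi : i\in S\}$ containing the spine. Contracting $F$ merges $a$, $b$ and all of $S$ into a single vertex $v$. Each remaining spike becomes a pair of parallel edges between $v$ and $i$. So $T_n/F$ is a direct sum of $n-|S|$ parallel pairs, with Boolean lattice of flats of rank $n-|S|$. By Lemma~\ref{lem:boolean} its equivariant KL polynomial is trivial.

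It is not $T_{n-|S|}$. The ranks already disagree: $\rk(T_n/F)=n-|S|$, whereas $\rk(T_{n-|S|})=n-|S|+1$. The graph on $\{ab\}\cup([n]\setminus S)$ has one vertex too few to be $K_{1,1,n-|S|}$.

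Hence the spine orbits contribute $\sum_s t^{s+1}h_sh_{n-s}$, not $\sum_s t^{s+1}h_s\mathsf P^A_{n-s}$. In your normalization the correct identity is
\[
\mathcal Z=H(tu)^2\,\mathcal P+t\,H(tu)\,H(u),
\]
not $(tH(tu)+H(tu)^2)\mathcal P$.

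Your identity cannot be repaired by any choice of candidate. Take $n=2$ and write $P_{T_2}(t)=p_0+p_1t$ non-equivariantly. Your sum becomes $p_0+(p_0+p_1+4)t+(p_1+6)t^2+t^3$. Palindromicity then forces both $p_0=1$ and $p_0=2$. With the true $P_{T_2}=1+t$ your sum gives $1+6t+7t^2+t^3$, while $Z_{T_2}(t)=1+6t+6t^2+t^3$.

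A harmless slip in the other family: the parallel class of the spine has $|A|+|B|+1$ elements, not $|A||B|+1$. Only the lattice matters there, so this does not affect anything.

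With the correct identity, the step you left vague becomes a short computation. It is identical to Lemma~\ref{lem:PhiT-PhiC}, Proposition~\ref{prop:ZT-ZC-relation} and Corollary~\ref{cor:palindromicity-transfer} with $Y$ replaced by $X$.

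Weight by $u^{n+1}$, so that degree-$(n+1)$ palindromicity becomes invariance under $(t,u)\mapsto(t^{-1},tu)$. The candidate series $\Psi=\sum_n R_n u^{n+1}$ equals $uH(u)\bigl(1+tu\Phi_C\bigr)$. Proposition~\ref{prop:ZC-in-terms-of-PhiC} gives $H(tu)(1+tu\Phi_C)=tu\mathcal Z_C+1+h_1tu$. Therefore
\[
H(tu)^2\,\Psi+tu\,H(tu)\,H(u)=u\,H(u)\,H(tu)\bigl(tu\,\mathcal Z_C+(1+t)+h_1\,tu\bigr).
\]
Under $(t,u)\mapsto(t^{-1},tu)$, the prefactor gains a factor $t$. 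By Lemma~\ref{lem:pal-ZC}, the bracket gains a factor $t^{-1}$.

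The summand $t$ in $(1+t)$ is exactly the contribution of the Boolean contractions at the spine flats. With your term $tH(tu)\mathcal P$ that summand is missing and the invariance fails. Induction on $n$ with the degree bound then finishes exactly as in the proof of Theorem~\ref{thm:main}.

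Alternatively, the proposition is an immediate corollary of Theorem~\ref{thm:main}. Equivariant KL polynomials commute with restriction to subgroups. Also $\operatorname{Res}^{\Bn}_{\Sn}V_{\lambda,\mu}\cong\Ind_{\Sg{a}\times\Sg{b}}^{\Sn}(V_\lambda\boxtimes V_\mu)$ with $a=|\lambda|$, $b=|\mu|$. So restriction to $\Sn$ is the specialization $Y\mapsto X$ of~\eqref{eq:main}.
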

	
	\begin{rem}
		The Type A formula in Proposition~\ref{prop:xz} was first discovered through a serendipitous observation.
		In contrast, the Type B formula in the present work was conjectured directly from computational data generated by SageMath.
		The wreath product description of the hyperoctahedral group makes the underlying inductive pattern more transparent than in the Type~A case, where restricting to $\Sn$ can mask this structure.
		The source code used to generate the data and carry out these checks is available at \url{https://github.com/mathxie/equivariant-KLS-polynomials}.
	\end{rem}
	
	\section{The thagomizer matroid under the \texorpdfstring{$\Bn$}{Bn}-action}\label{sec:thagomizer}
	
	\subsection{The automorphism group}\label{sec:aut}
	
	Fix $n\ge 0$.
	Write $K_{1,1,n}$ for the graph with vertex set $\{A,B,1,\dots,n\}$ and edge set
	\[
	E_n=\{e_*\}\cup\{a_i,b_i \colon 1\le i\le n\},
	\qquad
	e_*:=AB,\quad a_i:=A i,\quad b_i:=B i.
	\]
	Thus $T_n=M(K_{1,1,n})$ is a rank-$(n+1)$ graphic matroid on $2n+1$ elements.
	We refer to the pair~$\{a_i,b_i\}$ as the $i$th \emph{spike} and to $e_*$ as the \emph{spine}.
	
	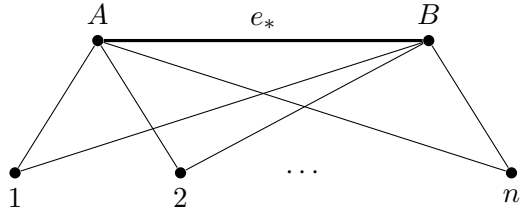
\begin{figure}[ht]
		\centering
		\begin{tikzpicture}[scale=1.1]
			\node[circle, fill=black, inner sep=1.5pt, label=above:$A$] (A) at (-2,1.6) {};
			\node[circle, fill=black, inner sep=1.5pt, label=above:$B$] (B) at (2,1.6) {};
			\draw[very thick] (A) -- node[above] {$e_*$} (B);
			
			\node[circle, fill=black, inner sep=1.5pt, label=below:$1$] (v1) at (-3,0) {};
			\node[circle, fill=black, inner sep=1.5pt, label=below:$2$] (v2) at (-1,0) {};
			\node (vd) at (0.5,0) {$\cdots$};
			\node[circle, fill=black, inner sep=1.5pt, label=below:$n$] (vn) at (3,0) {};
			
			\draw (A) -- node[above, sloped, font=\small, pos=0.55] {} (v1);
			\draw (B) -- node[above, sloped, font=\small, pos=0.55] {} (v1);
			
			\draw[above, sloped, font=\small, pos=0.55] (A) -- (v2);
			\draw[above, sloped, font=\small, pos=0.55] (B) -- (v2);
			
			\draw (A) -- node[above, sloped, font=\small, pos=0.55] {} (vn);
			\draw (B) -- node[above, sloped, font=\small, pos=0.55] {} (vn);
		\end{tikzpicture}
		\caption{A picture of $K_{1,1,n}$ (and hence of the thagomizer matroid $T_n$): the spine edge is $e_*=AB$, and the $i$th spike is the pair of edges $\{a_i,b_i\}$ joining the vertex $i$ to $A$ and $B$.}\label{fig:thagomizer}
	\end{figure}
	
	We begin by identifying the full matroid automorphism group of $T_n$.
	This allows us to regard the group acting by permuting the spikes and by swapping the two elements within each spike as the ambient symmetry group throughout.
	We omit the proof, since it is an elementary verification from the  structure of $T_n$.
	\begin{prop}\label{prop:aut}
		Assume $n\ge 2$.
		The full matroid automorphism group of $T_n$ is the wreath product
		\[
		\Aut(T_n)\cong \Bn=C_2\wr \Sn\cong C_2^n\rtimes \Sn,
		\]
		generated by permutations of the $n$ spikes $\{a_i,b_i\}$ and by independent transpositions $a_i\leftrightarrow b_i$ for each~$i$.
	\end{prop}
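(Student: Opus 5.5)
The plan is to prove two inclusions: first that the signed permutations of spikes act as matroid automorphisms, and then that every automorphism is of this form. For the first inclusion, a permutation $\sigma\in\Sn$ of the vertices $1,\dots,n$ is a graph automorphism of $K_{1,1,n}$ fixing $A$ and $B$, hence induces a matroid automorphism sending $a_i\mapsto a_{\sigma(i)}$, $b_i\mapsto b_{\sigma(i)}$. For the single transposition $a_i\leftrightarrow b_i$ (fixing everything else), I would check directly that it preserves the circuits. The circuits of $T_n$ are the triangles $\{e_*,a_i,b_i\}$ and the $4$-cycles $\{a_i,b_i,a_j,b_j\}$ ($i\ne j$), since every cycle in $K_{1,1,n}$ alternates between $\{A,B\}$ and the vertices $i$ and so has length at most $4$. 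Both families are visibly stable under swapping $a_i$ with $b_i$. These maps generate a copy of $C_2^n\rtimes\Sn$, and it acts faithfully on $E_n$, so $\Bn\hookrightarrow\Aut(T_n)$.

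For the reverse inclusion, I will show that any automorphism $\phi$ fixes $e_*$ and permutes the spikes. The key combinatorial fact is that $e_*$ is the unique element lying only in circuits of size $3$ and in $n$ of them, whereas each $a_i$ lies in exactly one triangle. The one triangle through $a_i$ is $\{e_*,a_i,b_i\}$; any other triangle $\{a_i,b_j,\cdot\}$ would need a third edge joining $i$ and $j$, which does not exist. For $n\ge 2$ we have $n\neq 1$, so the triangle counts distinguish $e_*$, and therefore $\phi(e_*)=e_*$. Consequently $\phi$ permutes the triangles $\{e_*,a_i,b_i\}$, and so it permutes the spike pairs $\{a_i,b_i\}$. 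This defines a homomorphism $\Aut(T_n)\to\Sn$ whose kernel consists of automorphisms preserving each spike setwise, and that kernel is contained in $C_2^n$. Hence $|\Aut(T_n)|\le 2^n n!=|\Bn|$, which gives equality.

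The only delicate point is the hypothesis $n\ge 2$, which I expect to be the main obstacle in presentation rather than in substance. For $n=1$ the matroid is a triangle $U_{2,3}$, all three elements are symmetric, and $\Aut\cong\Sg{3}\not\cong\B{1}$. The triangle-count argument is exactly where $n\ge 2$ enters, so I would state that step carefully and also double-check the classification of circuits.
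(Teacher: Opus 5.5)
Your proof is correct and complete. The paper gives no proof here, dismissing it as an elementary verification, so there is no argument of the paper's to compare against. Your three steps supply exactly that verification:
\begin{itemize}
\item the circuits are the triangles $\{e_*,a_i,b_i\}$ and the $4$-cycles $\{a_i,b_i,a_j,b_j\}$, so the swaps $a_i\leftrightarrow b_i$ preserve them;
\item counting triangles through an element forces $\phi(e_*)=e_*$, and this is precisely where $n\ge 2$ is needed;
\item the action on the triangles through $e_*$ bounds $|\Aut(T_n)|\le 2^n n!$.
\end{itemize}

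One wording fix: the triangle through $e_*$ does not ``alternate'' between $\{A,B\}$ and the vertices $i$, so that phrase does not quite justify the circuit classification. A cleaner justification runs as follows. Each vertex $i$ has degree $2$, so a cycle through $i$ uses both $a_i$ and $b_i$. Every cycle must pass through some vertex $i$, and hence through both $A$ and $B$. A cycle is therefore a union of two internally disjoint $A$--$B$ paths, chosen among $e_*$ and the paths $A\,i\,B$.
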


	\begin{rem}
		For $n=0$, $T_0$ has one element and $\Aut(T_0)$ is trivial.
		For $n=1$, $T_1$ is the rank-$2$ uniform matroid on $3$ elements, so $\Aut(T_1)\cong \mathfrak{S}_3$, while the subgroup $\Bn=C_2\wr \Sn\cong C_2$ is the “signed spike swap” subgroup fixing $e_*$.
	\end{rem}
	
	\subsection{Flats of \texorpdfstring{$T_n$}{Tn} and their \texorpdfstring{$\Bn$}{Bn}-orbits}\label{sec:flats}
	
	The lattice of flats of $T_n$ has two families, distinguished by whether the spine $e_*$ lies in the flat.
	
	\begin{lem}\label{lem:flats}
		Let $F$ be a flat of $T_n$.
		\begin{enumerate}[label={(\alph*)}]
			\item If $e_*\notin F$, then for each spike $\{a_i,b_i\}$ the intersection $F\cap\{a_i,b_i\}$ is one of $\varnothing,\{a_i\},\{b_i\}$.
			Writing $I:=\{i\colon a_i\in F\}$ and $J:=\{i\colon b_i\in F\}$, we have $I\cap J=\varnothing$ and $\rk(F)=|I|+|J|$.
			\item If $e_*\in F$, then there exists a subset $S\subseteq\{1,\dots,n\}$ such that
			\[
			F=\{e_*\}\cup\{a_i,b_i\colon i\in S\}.
			\]
			In this case $\rk(F)=|S|+1$.
		\end{enumerate}
	\end{lem}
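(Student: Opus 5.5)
The plan is to use the graph structure directly: flats of a graphic matroid $M(G)$ are exactly the edge sets $F$ such that no edge outside $F$ has both endpoints in a single connected component of the spanning subgraph $(V,F)$. The rank of such a flat is $|V|$ minus the number of components of $(V,F)$. Both parts then reduce to examining the triangles $\{e_*,a_i,b_i\}$, which are the only short cycles through the spine.

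For part (a), suppose $e_*\notin F$. First I note that if both $a_i,b_i\in F$, then $A$ and $B$ are joined by the path $A\,i\,B$ inside $F$. Closure would then force $e_*\in F$, a contradiction, so each $F\cap\{a_i,b_i\}$ is $\varnothing$, $\{a_i\}$ or $\{b_i\}$, and $I\cap J=\varnothing$. Next, $F$ is then the edge set of a forest. Its components are the star centered at $A$ with leaves $I$, the star centered at $B$ with leaves $J$, and isolated vertices; no cycle is possible, since any cycle in $K_{1,1,n}$ avoiding $e_*$ uses both edges of at least two spikes. Therefore $F$ is independent and $\rk(F)=|F|=|I|+|J|$. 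I will also remark that every such pair $(I,J)$ does give a flat. Any edge outside $F$ joins two distinct components: $a_i$ with $i\notin I$ joins $A$ to a vertex not in $A$'s component, and $e_*$ joins the two stars. This converse is not strictly demanded by the statement, but it is useful for the orbit classification that follows.

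For part (b), suppose $e_*\in F$, so $A$ and $B$ lie in the same component. Let $S$ be the set of $i$ with $a_i\in F$ or $b_i\in F$. If, say, $a_i\in F$, then vertex $i$ lies in the component of $A$ and $B$, so $b_i$ has both endpoints in one component and closure gives $b_i\in F$. The symmetric argument handles $b_i\in F$. Hence $F=\{e_*\}\cup\{a_i,b_i: i\in S\}$. The components of $(V,F)$ are $\{A,B\}\cup S$ together with the $n-|S|$ singletons, so the number of components is $n-|S|+1$. With $|V|=n+2$, this gives $\rk(F)=(n+2)-(n-|S|+1)=|S|+1$.

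No step is a real obstacle. The only point needing care is justifying in (a) that $F$ contains no cycle, which is settled by the observation above about cycles that avoid $e_*$.
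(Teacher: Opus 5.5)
Your proof is correct and complete. The paper states this lemma without proof, so there is no argument in the paper to compare against.

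You use the graphic-matroid description of flats: no edge outside $F$ may join two vertices in the same component of $(V,F)$. Combined with $\rk(F)=|V|-c(V,F)$, this is the natural way to verify the lemma. In (a), your observation that every cycle avoiding $e_*$ is a $4$-cycle $A\,i\,B\,j\,A$ correctly settles independence. The component count then gives $\rk(F)=|I|+|J|$. In (b), the closure argument forcing $b_i\in F$ whenever $a_i\in F$ (and conversely) is exactly right, and so is the count of $n-|S|+1$ components.

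Your converse remark is worth keeping. The orbit sums in Proposition~\ref{prop:ZT-in-terms-of-PhiT} and Lemma~\ref{lem:Q-from-P} implicitly need that every disjoint pair $(I,J)$ and every subset $S$ actually yields a flat. You proved this for (a); for (b) it is equally immediate. Each edge $a_j$ or $b_j$ with $j\notin S$ joins the component $\{A,B\}\cup S$ to the isolated vertex $j$. You may want to state the (b) converse alongside the one for (a).
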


	To study the $Z$-polynomial orbit sum in the next section, we also need explicit orbit representatives and stabilizers.
	
	\begin{lem}\label{lem:stabilizers}
		Let $F$ be a flat of $T_n$.
		\begin{enumerate}[label={(\alph*)}]
			\item If $e_*\notin F$ and $k:=\rk(F)$, then $F$ lies in the $\Bn$-orbit of
			\[
			F^{\mathrm{I}}_{k}:=\{a_1,\dots,a_k\},
			\]
			and its stabilizer is isomorphic to
			\[
			(\Bn)_{F^{\mathrm{I}}_{k}}\cong \Sg{k}\times \B{n-k}.
			\]
			\item If $e_*\in F$ and $k:=\rk(F)-1$, then $F$ lies in the $\Bn$-orbit of
			\[
			F^{\mathrm{II}}_{k}:=\{e_*\}\cup\{a_1,b_1,\dots,a_k,b_k\},
			\]
			and its stabilizer is isomorphic to
			\[
			(\Bn)_{F^{\mathrm{II}}_{k}}\cong \B{k}\times \B{n-k}.
			\]
		\end{enumerate}
	\end{lem}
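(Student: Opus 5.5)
The plan is a direct check using the flat classification in Lemma~\ref{lem:flats} and the explicit description of $\Bn$ from Proposition~\ref{prop:aut}. Here $\Bn$ acts by permuting spike indices via $\sigma\in\Sn$ and independently swapping $a_i\leftrightarrow b_i$ for $i$ in a sign set. Both orbit claims and both stabilizer claims will be deduced from how such a signed permutation moves the data $(I,J)$ or $S$ attached to a flat.

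For part (a), let $F$ be a flat with $e_*\notin F$. Lemma~\ref{lem:flats}(a) gives disjoint sets $I,J$ with $F=\{a_i: i\in I\}\cup\{b_j: j\in J\}$ and $k=|I|+|J|$. First I apply the swaps $a_j\leftrightarrow b_j$ for $j\in J$, which sends $F$ to $\{a_i: i\in I\cup J\}$. Then I apply a permutation of the spikes sending $I\cup J$ to $\{1,\dots,k\}$, which produces $F^{\mathrm{I}}_k$.

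For the stabilizer of $F^{\mathrm{I}}_k$, consider an element $g=(\epsilon,\sigma)$ with sign vector $\epsilon$ and underlying permutation $\sigma$. If $g$ fixes $F^{\mathrm{I}}_k$, it must map $\{a_1,\dots,a_k\}$ to itself. This forces $\sigma$ to preserve $\{1,\dots,k\}$. It also forces no swap to occur on the spikes $1,\dots,k$, since a swap would produce some $b_i$. On the complementary spikes $k+1,\dots,n$, both permutations and swaps are unrestricted. Conversely, every such element fixes the flat. Hence the stabilizer is $\Sg{k}\times\B{n-k}$, with $\Sg{k}$ embedded in $\B{k}$ as the all-positive signed permutations, matching the convention of Lemma~\ref{lem:ind-sk-bn-k}.

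For part (b), write $F=\{e_*\}\cup\{a_i,b_i: i\in S\}$ with $|S|=k$, using Lemma~\ref{lem:flats}(b). A spike permutation sending $S$ to $\{1,\dots,k\}$ maps $F$ to $F^{\mathrm{II}}_k$, because $e_*$ is fixed by every element of $\Bn$. An element fixes $F^{\mathrm{II}}_k$ exactly when its underlying permutation preserves $\{1,\dots,k\}$. Swaps are harmless everywhere, since each spike is either wholly inside or wholly outside $F^{\mathrm{II}}_k$. So the stabilizer is the Young subgroup $\B{k}\times\B{n-k}$.

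No step is a real obstacle; the argument consists only of these verifications. The one point needing care is the degenerate cases $n\le 1$, where $\Bn$ is the subgroup generated by spike permutations and swaps rather than the full automorphism group. The argument above uses only those generators, so it applies uniformly for every $n\ge 0$.
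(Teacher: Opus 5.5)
Your proposal is correct and follows the paper's proof essentially step for step. In (a) you swap the $b$-edges to $a$-edges and permute the spikes onto $\{1,\dots,k\}$, in (b) you permute $S$ onto $\{1,\dots,k\}$, and in both parts you read off the stabilizer from the constraints on $(\varepsilon,\sigma)$. Your explicit two-sided check in (b), that any stabilizing element must preserve $\{1,\dots,k\}$, and your remark on the cases $n\le 1$ are small points the paper leaves implicit.
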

	
	\begin{proof}
		(a) Let $F$ be of Type~(a), so $F$ is determined by disjoint sets $A,B$ as in Lemma~\ref{lem:flats}(a) with $|A|+|B|=k$.
		Applying the swaps of $a_i$ and $b_i$ for all $i\in B$, the flat $F$ is sent to one with $B=\varnothing$ and $|A|=k$.
		Then apply a permutation in $\Sn\subseteq \Bn$ that maps~$A$ to $\{1,\dots,k\}$.
		This shows that every Type~(a) flat of rank $k$ lies in the orbit of $F_k^{\mathrm{I}}$. 
		
		To determine the stabilizer of $F_k^{\mathrm{I}}$, write an element of $\Bn$ as $(\varepsilon,\sigma)\in C_2^n\rtimes \Sn$, where $\varepsilon$ records the independent swaps and $\sigma$ permutes spike indices.
		Stabilizing $F_k^{\mathrm{I}}$ forces $\sigma$ to preserve the index set $\{1,\dots,k\}$.
		Moreover, if $i\le k$, then $a_i\in F_k^{\mathrm{I}}$ and $b_i\notin F_k^{\mathrm{I}}$, so a stabilizer element cannot swap $a_i$ with $b_i$.
		Hence $\varepsilon_i$ must be trivial for $i\le k$, and the action on $\{1,\dots,k\}$ reduces to the permutation subgroup $\Sg{k}$.
		On indices $\{k+1,\dots,n\}$, there is no restriction: we may permute indices and swap $a_i\leftrightarrow b_i$ independently.
		This yields a factor $\B{n-k}$ and proves $(\Bn)_{F_k^{\mathrm{I}}}\cong \Sg{k}\times \B{n-k}$.
		
		(b) Let $F$ be Type~(b) determined by $S\subseteq\{1,\dots,n\}$ with $|S|=k$.
		Apply a permutation in $\Sn\subseteq \Bn$ sending $S$ to $\{1,\dots,k\}$ to obtain the representative $F_k^{\mathrm{II}}$.
		Inside the first $k$ spikes, swaps $a_i\leftrightarrow b_i$ preserve $F_k^{\mathrm{II}}$ because both edges are present.
		Similarly, inside the remaining $n-k$ spikes, permutations and swaps preserve $F_k^{\mathrm{II}}$ because no edges from those spikes are present.
		This gives a stabilizer isomorphic to $\B{k}\times \B{n-k}$.
	\end{proof}
	
	\begin{rem}
		It is instructive to compare Lemma~\ref{lem:stabilizers} with the case of the symmetric group action~$\Sn \curvearrowright T_n$ studied in~\cite{xie2019thagomizer}.
		Under $\Sn$, the orbit of a Type~(a) flat $F$ depends not just on $k = \rk(F)$ but on the specific distribution of edges within spikes.
		If $F$ contains $i$ edges of the form $a_\ell$ and $j$ edges of the form $b_\ell$ (with $i+j=k$), its $\Sn$-stabilizer is isomorphic to $\Sg{i} \times \Sg{j} \times \Sg{n-k}$.
		Under the larger group $\Bn$, all such flats for a fixed $k$ are merged into a single orbit, and the stabilizer $\Sg{k} \times \B{n-k}$ reflects the fact that we can now permute all $k$ ``active'' spikes (regardless of whether they contain $a$ or $b$ edges) and independently swap $a \leftrightarrow b$ in the $n-k$ ``empty'' spikes.
	\end{rem}

	\section{Polynomials and generating series of the graphic matroids of cycle graphs}\label{sec:cycle}
	
	Let $C_n$ denote the graphic matroid of the cycle graph of rank $n-1$, equipped with its natural $\Sn$-action.
	Equivalently, $C_n$ is the corank-one uniform matroid $U_{n-1,n}$.
	For $n\ge 2$, set
	\[
	\mathsf{C}_n(Y;t):=\ch\!\left(P_{C_n}^{\Sn}(t)\right)[Y]\in \Sym[Y][t]\subseteq \Sym[X,Y][t],
	\]
	and set $\mathsf{C}_0(Y;t)=\mathsf{C}_1(Y;t)=0$.
	
	Proudfoot--Wakefield--Young~\cite{proudfoot2016intersection} computed these polynomials explicitly and identified each graded piece as an irreducible $\Sn$-representation.
	We record their formula in terms of symmetric functions due to Frobenius characteristic map.
	
	\begin{thm}[\thmcite{proudfoot2016intersection}{Theorem~1.2}]\label{thm:cycle-input}
		For $n\ge 2$,
		\[
		\mathsf{C}_n(Y;t)=\sum_{i=0}^{\left\lfloor \frac{n}{2}\right\rfloor-1} s_{(n-2i,2^i)}[Y]\,t^i.
		\]
	\end{thm}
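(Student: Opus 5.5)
We prove the formula by induction on $n$, using the uniqueness in Definition~\ref{def:equivariant-kl-z-polynomials}. The plan is to write down the $\Sn$-equivariant $Z$-polynomial of $C_n=U_{n-1,n}$ in terms of the polynomials $\mathsf{C}_m$ for $m<n$, and then check that the proposed $\mathsf{C}_n$ makes it palindromic of degree $n-1$. Since $\deg$ of the candidate is $\lfloor n/2\rfloor-1<\frac{n-1}{2}$, the degree bound (ii) holds automatically. Uniqueness then identifies the candidate with $\ch(P_{C_n}^{\Sn}(t))$.

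\textbf{Step 1: the orbit sum.} The flats of $U_{n-1,n}$ are all subsets of size $k\le n-2$, together with the full ground set. The $\Sn$-orbits are indexed by $k$, with representative $\{1,\dots,k\}$ and stabilizer $\Sg{k}\times\Sg{n-k}$.
\begin{itemize}
\item For $k\le n-2$, the contraction is $U_{n-1-k,n-k}=C_{n-k}$. The factor $\Sg{k}$ acts trivially on it and $\Sg{n-k}$ acts naturally.
\item The full set is the unique flat of rank $n-1$, and its contraction has rank $0$.
\end{itemize}
By \eqref{eq:frob-sn-ind}, this gives
\[
\ch\bigl(Z_{C_n}^{\Sn}(t)\bigr)[Y]=\sum_{k=0}^{n-2} t^{k}\,h_k[Y]\,\mathsf{C}_{n-k}(Y;t)\;+\;t^{n-1}h_n[Y].
\]
In this sum the $k=0$ term contains the unknown $\mathsf{C}_n$, and all other terms are known by induction.

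\textbf{Step 2: reduce palindromicity to a Schur-function identity.} Palindromicity of degree $n-1$ means $t^{n-1}Z(t^{-1})=Z(t)$. Substituting the induction hypothesis, this becomes an identity in $\Sym_n[t]$. I would package it with generating series in $u$, writing $\sum_{k}h_k[Y](tu)^k=H_Y(tu)$, so that the $Z$-series reads $H_Y(tu)\cdot\bigl(\sum_m \mathsf{C}_m u^m\bigr)$ plus a boundary term. Then I would compare coefficients of $s_\lambda[Y]\,t^j$ on both sides.

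By the Pieri rule, $h_k\,s_{(m-2i,2^i)}$ is a sum of $s_\lambda$ over shapes obtained by adding a horizontal strip. Such shapes have the form $\lambda=(\lambda_1,\lambda_2,2^{i-1},\epsilon)$ with $\epsilon\in\{0,1,2\}$ and $\lambda_2\le m-2i$. In particular, only shapes with all rows after the second of length at most $2$ appear. For each such $\lambda$, the coefficient of $s_\lambda t^j$ is a count of pairs $(k,i)$, and the identity amounts to showing that these counts are symmetric under $j\leftrightarrow n-1-j$.

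\textbf{Main obstacle.} The hardest step is this coefficient bookkeeping: showing that for fixed $\lambda$, the set of valid $(k,i)$ contributing to $t^j$ is in bijection with the set contributing to $t^{n-1-j}$, including the boundary term $t^{n-1}h_n$ and the $k=0$ term. My first attempt would be to parametrize by $(\lambda_2,\ell(\lambda),\epsilon)$, where each contributing pair has $i\in\{\ell(\lambda)-2,\ell(\lambda)-3\}$ according to $\epsilon$. The contributions for fixed $\lambda$ then form one or two intervals of consecutive powers of $t$, and I expect these intervals to be centered at $\frac{n-1}{2}$.

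If this direct Pieri bookkeeping becomes messy, the fallback is to verify instead the equivalent equivariant KL recursion. This expresses $t^{n-1}P(t^{-1})-P(t)$ through the characteristic polynomial of $U_{n-1,n}$, whose equivariant coefficients are known hook-type representations, and leads to a similar but more standard Pieri computation.
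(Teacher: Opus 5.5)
The paper does not prove this statement; it quotes it from Proudfoot--Wakefield--Young. Your framework is sound. It consists of induction on $n$, the orbit sum of Step~1 (which is exactly \eqref{eq:cycle-Zn}), and the fact that a difference $f$ with $f(t)=t^{n-1}f(t^{-1})$ and $\deg f<\frac{n-1}{2}$ must vanish. This is the same mechanism the paper uses for Theorem~\ref{thm:main}.

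\textbf{The gap.} The palindromicity check is the whole content of the proof, and you leave it as an expectation. The structure you predict for it is also wrong in two ways.
\begin{itemize}
\item The shape $(n-k-2i,2^i)$ has length $i+1$, and a horizontal strip adds at most one row. So the contributing values are $i\in\{\ell(\lambda)-1,\ell(\lambda)-2\}$, not $\{\ell(\lambda)-2,\ell(\lambda)-3\}$. Your indexing would force $i=-1$ for two-row $\lambda$.
\item More importantly, the intervals are \emph{not} individually centered at $\frac{n-1}{2}$. When there are two of them, the reflection $j\mapsto n-1-j$ swaps them. An attempt to show each interval is centered would therefore fail.
\end{itemize}

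\textbf{What the bookkeeping gives.} Put $c:=n-k-2i$. Then $(k,i)$ contributes $s_\lambda t^{j}$ with $j=n-i-c$ exactly when $\lambda/(c,2^i)$ is a horizontal strip and $c\ge 2$. The condition $c\ge 2$ encodes both the range of $i$ in $\mathsf{C}_{n-k}$ and the constraint $k\le n-2$. Hence $\max(\lambda_2,2)\le c\le \lambda_1$, and the cases are:
\begin{itemize}
\item $\lambda=(n)$: only $i=0$ contributes, giving $j\in[0,n-2]$; the boundary term $t^{n-1}h_n$ supplies $j=n-1$.
\item $\lambda=(n-1,1)$: only $i=0$ contributes, giving $j\in[1,n-2]$.
\item $\lambda=(\lambda_1,\lambda_2,2^a,1)$ with $\lambda_2\ge 2$, $a\ge 0$: only $i=a+1$ contributes, giving $j\in[\lambda_2+a,\lambda_1+a]$, which is centered at $\frac{n-1}{2}$.
\item $\lambda=(\lambda_1,\lambda_2,2^a)$ with $\lambda_2\ge 2$, $a\ge 0$: $i=a$ gives $j\in[\lambda_2+a,\lambda_1+a]$, centered at $\frac n2$. $i=a+1$ gives $j\in[\lambda_2+a-1,\lambda_1+a-1]$, centered at $\frac{n-2}{2}$. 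The map $j\mapsto n-1-j$ swaps these two intervals.
\item Every other $\lambda$ has coefficient $0$.
\end{itemize}
In each case the multiset of exponents is invariant under $j\mapsto n-1-j$, which is the palindromicity you need.

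With this case analysis supplied, your argument becomes a complete, self-contained proof, and the characteristic-polynomial fallback is unnecessary. As written, though, the proposal only reduces the theorem to an identity it does not verify.
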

	
	\medskip
	
	For $n\ge 0$, write
	\[
	\mathsf{P}_n(X,Y;t):=\chB\!\left(P_{T_n}^{\Bn}(t)\right)\in \Sym[X,Y][t].
	\]
	Since $\chB$ is an isomorphism, by \eqref{def:chB} we can see that Theorem~\ref{thm:main} is equivalent to the following  identity in terms of symmetric functions
	\begin{equation}\label{eq:main}
		\mathsf{P}_n(X,Y;t)=h_n[X]+t\sum_{k=2}^{n} h_{n-k}[X]\,\mathsf{C}_k(Y;t).
	\end{equation}
	
	To compare $Z$-palindromicity for the  cycle families, it is convenient to package them into generating series.
	Define the following series in the $Y$-alphabet by
	\begin{align*}
		\Phi_C(t,u)
		&:=\sum_{n\ge 2}\ch\!\left(P_{C_n}^{\Sn}(t)\right)[Y]\,u^{n-1}
		=\sum_{n\ge 2}\mathsf{C}_n(Y;t)\,u^{n-1},\\
		\mathcal{Z}_C(t,u)
		&:=\sum_{n\ge 2}\ch\!\left(Z_{C_n}^{\Sn}(t)\right)[Y]\,u^{n-1}.
	\end{align*}

	
	\begin{prop}\label{prop:ZC-in-terms-of-PhiC}
		We have
		\begin{equation}\label{eq:ZC-PhiC}
			\mathcal{Z}_C(t,u)=\frac{H_Y(tu)-1-h_1[Y]\cdot tu}{tu}+H_Y(tu)\,\Phi_C(t,u).
		\end{equation}
	\end{prop}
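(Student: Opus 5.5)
We will prove \eqref{eq:ZC-PhiC} by writing out $Z_{C_n}^{\Sn}(t)$ via part~(iii) of Definition~\ref{def:equivariant-kl-z-polynomials}, classifying the $\Sn$-orbits of flats of $C_n=U_{n-1,n}$, and then summing over $n$.

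\textbf{Orbits of flats.} The flats of $U_{n-1,n}$ are the subsets $S$ with $|S|\le n-2$, of rank $|S|$, together with the full ground set $E$, of rank $n-1$. The orbits are indexed by $j=|S|\in\{0,\dots,n-2\}$ together with the single orbit $\{E\}$. A representative $S=\{1,\dots,j\}$ has stabilizer $\Sg{j}\times\Sg{n-j}$.

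\textbf{Contractions.} The contraction $C_n/S$ is $U_{n-1-j,\,n-j}=C_{n-j}$. The factor $\Sg{j}$ acts trivially on it, and $\Sg{n-j}$ acts naturally. Hence
\[
P^{\Sg{j}\times\Sg{n-j}}_{C_n/S}=\one_{\Sg{j}}\boxtimes P^{\Sg{n-j}}_{C_{n-j}}(t),
\]
and by~\eqref{eq:frob-sn-ind} its induced character is $h_j[Y]\,\mathsf{C}_{n-j}(Y;t)$ for $n-j\ge2$. The top flat $E$ has rank-zero contraction, so it contributes $t^{n-1}h_n[Y]$. We therefore get
\[
\ch\bigl(Z_{C_n}^{\Sn}(t)\bigr)[Y]=\sum_{j=0}^{n-2}t^j h_j[Y]\,\mathsf{C}_{n-j}(Y;t)+t^{n-1}h_n[Y].
\]

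\textbf{Generating function.} Multiply by $u^{n-1}$ and sum over $n\ge2$. In the first term set $m=n-j\ge2$ and write $u^{n-1}=(tu)^j\cdot u^{m-1}/t^j\cdot t^j$; more precisely, $t^ju^{n-1}=(tu)^j u^{m-1}$. The double sum factors as
\[
\Bigl(\sum_{j\ge0}h_j[Y](tu)^j\Bigr)\Bigl(\sum_{m\ge2}\mathsf{C}_m u^{m-1}\Bigr)=H_Y(tu)\,\Phi_C(t,u).
\]
The second term is
\[
\sum_{n\ge2}h_n[Y]\,(tu)^{n-1}=\frac{H_Y(tu)-1-h_1[Y]\,tu}{tu},
\]
which gives~\eqref{eq:ZC-PhiC}.

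\textbf{Main point to check.} The only delicate step is the identification of the contraction together with its stabilizer action. One must confirm that $C_n/S\cong C_{n-j}$ is loopless and that the induced action is exactly the natural $\Sg{n-j}$-action inflated trivially along $\Sg{j}$. With the convention $\mathsf{C}_0=\mathsf{C}_1=0$, the $m\ge2$ range matches the excluded flats with $|S|\ge n-1$, and the top flat is handled separately as above.
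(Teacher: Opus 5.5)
Your proof is correct and follows the paper's argument step for step. You use the same orbit classification of the flats of $C_n=U_{n-1,n}$ with stabilizers $\Sg{j}\times\Sg{n-j}$, the identification $C_n/S\cong C_{n-j}$ with $\Sg{j}$ acting trivially, the separate top-flat term $t^{n-1}h_n[Y]$, and the reindexing $m=n-j$ that factors out $H_Y(tu)\,\Phi_C(t,u)$. Delete only the garbled first version of the identity and keep the ``more precisely'' one, $t^j u^{n-1}=(tu)^j u^{m-1}$.
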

	\begin{proof}
		Fix $n\ge 2$ and apply Definition~\ref{def:equivariant-kl-z-polynomials}(iii) to $C_n$ with the $\Sn$-action.
		The lattice of flats of $C_n$ consists of all subsets of size at most $n-2$ together with the full set $E$.
		Thus the $\Sn$-orbits of flats are indexed by $k=|F|\in\{0,1,\dots,n-2\}$ together with the orbit $F=E$.
		
		Fix $0\le k\le n-2$ and let $F\subseteq E$ be a flat of size $k$.
		Then $\rk(F)=k$.
		The stabilizer is $\Sg{k}\times \Sg{n-k}$, acting by permutations of $F$ and of its complement $E\setminus F$.
		Since $F$ is independent, contracting $F$ deletes those $k$ elements and reduces the rank by $k$, so $C_n/F$ is canonically isomorphic to $C_{n-k}$ on the remaining $n-k$ elements.
		The $\Sg{k}$ factor acts trivially on this contraction, so the defining properties in Definition~\ref{def:equivariant-kl-z-polynomials} imply
		\(
		P_{C_n/F}^{\Sg{k}\times \Sg{n-k}}(t)=\one_{\Sg{k}}\boxtimes P_{C_{n-k}}^{\Sg{n-k}}(t).
		\)
		Inducing to $\Sn$ and applying \eqref{eq:frob-sn-ind} yields
		\[
		\ch\!\left(\Ind_{\Sg{k}\times \Sg{n-k}}^{\Sn}(\one_{\Sg{k}}\boxtimes P_{C_{n-k}}^{\Sg{n-k}}(t))\right)
		=h_k\,\mathsf{C}_{n-k}(t).
		\]
		
		For the remaining orbit $F=E$, we have $\rk(E)=n-1$ and $C_n/E$ has rank $0$, hence $P_{C_n/E}^{\Sn}(t)=\one_{\Sn}$ by Definition~\ref{def:equivariant-kl-z-polynomials}(i), and $\ch(\one_{\Sn})=h_n$.
		Therefore
		\begin{equation}\label{eq:cycle-Zn}
			\ch\!\left(Z_{C_n}^{\Sn}(t)\right)
			=t^{n-1}h_n+\sum_{k=0}^{n-2} t^k\,h_k\,\mathsf{C}_{n-k}(t).
		\end{equation}
		Multiplying \eqref{eq:cycle-Zn} by $u^{n-1}$ and summing over $n\ge 2$ yields
		\[
		\mathcal{Z}_C(t,u)
		=\sum_{n\ge 2} t^{n-1}h_n[Y]\,u^{n-1}
		+\sum_{n\ge 2}\sum_{k=0}^{n-2} t^k\,h_k[Y]\;\mathsf{C}_{n-k}(Y;t)\,u^{n-1}.
		\]
		The first sum is
		\[
		\sum_{n\ge 2} t^{n-1} h_n[Y]\,u^{n-1}
		=\frac{H_Y(tu)-1-h_1[Y]\cdot tu}{tu},
		\]
		and reindexing the second by $m=n-k$ gives
		\[
		\sum_{n\ge 2}\sum_{k=0}^{n-2} t^k\,h_k[Y]\;\mathsf{C}_{n-k}(Y;t)\,u^{n-1}
		=H_Y(tu)\,\Phi_C(t,u).
		\]
		Substitution proves~\eqref{eq:ZC-PhiC}.
	\end{proof}
	
	We also need the palindromicity of the $Z$-polynomials of the graphic matroids of cycle graphs.
	
	\begin{lem}\label{lem:pal-ZC}
		We have
		\[
		\mathcal{Z}_C(t,u)=\mathcal{Z}_C(t^{-1},tu).
		\]
	\end{lem}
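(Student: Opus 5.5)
This identity is a direct translation of the palindromicity in Definition~\ref{def:equivariant-kl-z-polynomials}(iii) for each cycle matroid $C_n$. Since the generating series has $\mathcal{Z}_C(t,u)=\sum_{n\ge 2}\ch(Z_{C_n}^{\Sn}(t))[Y]\,u^{n-1}$, the plan is to compare the two sides coefficient by coefficient in $u$.

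Fix $n\ge 2$ and write $z_n(t):=\ch(Z_{C_n}^{\Sn}(t))[Y]\in\Sym[Y][t]$. The matroid $C_n$ has rank $n-1$. By Definition~\ref{def:equivariant-kl-z-polynomials}(iii), $Z_{C_n}^{\Sn}(t)$ is palindromic of degree $n-1$ in $\grVRep(\Sn)$:
\[
Z_{C_n}^{\Sn}(t)=t^{n-1}Z_{C_n}^{\Sn}(t^{-1}).
\]
The Frobenius characteristic $\ch$ is $\mathbb{Z}$-linear, so we apply it coefficientwise in $t$ and then apply the plethystic substitution $[Y]$. This gives $z_n(t)=t^{n-1}z_n(t^{-1})$ in $\Sym[Y][t,t^{-1}]$.

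Now substitute $(t,u)\mapsto(t^{-1},tu)$ into the series:
\[
\mathcal{Z}_C(t^{-1},tu)=\sum_{n\ge 2} z_n(t^{-1})\,t^{n-1}u^{n-1}=\sum_{n\ge2} z_n(t)\,u^{n-1}=\mathcal{Z}_C(t,u).
\]
The only point needing care is that the substitution is well defined as a formal series. Each coefficient of $u^{n-1}$ is a Laurent polynomial in $t$, so the substitution acts termwise and no convergence issue arises.

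There is no real obstacle here. The one thing to make explicit is that palindromicity of the degree-$(n-1)$ polynomial $Z_{C_n}^{\Sn}(t)$ is part of the defining property in Definition~\ref{def:equivariant-kl-z-polynomials}(iii), so it holds automatically. Alternatively, one could verify it from \eqref{eq:cycle-Zn} together with Theorem~\ref{thm:cycle-input}, but that is unnecessary.
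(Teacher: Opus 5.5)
Your proposal is correct and is essentially the paper's proof. Both arguments apply $\ch$ and the substitution $[Y]$ to the palindromicity $Z_{C_n}^{\Sn}(t)=t^{n-1}Z_{C_n}^{\Sn}(t^{-1})$ from Definition~\ref{def:equivariant-kl-z-polynomials}(iii), multiply by $u^{n-1}$, and sum over $n\ge 2$; the substitution $(t,u)\mapsto(t^{-1},tu)$ then absorbs the factor $t^{n-1}$ term by term.
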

	\begin{proof}
		For each $n\ge 2$, Definition~\ref{def:equivariant-kl-z-polynomials}(iii) implies that $Z_{C_n}^{\Sn}(t)$ is palindromic of degree $\rk(C_n)=n-1$, i.e.\ $\ch(Z_{C_n}^{\Sn}(t))[Y]=t^{n-1}\ch(Z_{C_n}^{\Sn}(t^{-1}))[Y]$.
		Multiplication by $u^{n-1}$ followed by summation over~$n\ge 2$ yields the stated invariance of $\mathcal{Z}_C$.
	\end{proof}

	\section{Proof of  Theorem~\ref{thm:main} via a \texorpdfstring{$Z$}{Z}-polynomial comparison}\label{sec:z-comparison}
	
	We  prove Theorem~\ref{thm:main} in this section.
	The argument uses only the orbit-sum definition of $Z$ from Definition~\ref{def:equivariant-kl-z-polynomials}, its palindromicity, and the explicit $\Sn$-equivariant Kazhdan--Lusztig polynomials of the  matroids~$C_n$.
	Note that the $\Bn$-equivariant characteristic polynomial of $T_n$ is not used (see Appendix~\ref{app:charpoly} for this computation).

	Define the thagomizer Kazhdan--Lusztig and $Z$ series by
	\begin{align*}
		\Phi_T(t,u)
		&:=\sum_{n\ge 0}\chB\!\left(P_{T_n}^{\Bn}(t)\right)\,u^{n+1}
		=\sum_{n\ge 0}\mathsf{P}_n(X,Y;t)\,u^{n+1},\\
		\mathcal{Z}_T(t,u)
		&:=\sum_{n\ge 0}\chB\!\left(Z_{T_n}^{\Bn}(t)\right)\,u^{n+1}.
	\end{align*}
	
	
	We first express each $Z$-series as an explicit combination of the corresponding KL series and homogeneous symmetric-function factors.
	The following identity is a direct orbit-sum computation using the two flat orbits from Lemma~\ref{lem:stabilizers}.
	
	\begin{prop}\label{prop:ZT-in-terms-of-PhiT}
		We have
		\begin{equation}\label{eq:ZT-PhiT}
			\mathcal{Z}_T(t,u)=H_{X+Y}(tu)\,\Phi_T(t,u)+t u\,H_X(tu)\,H_X(u).
		\end{equation}
	\end{prop}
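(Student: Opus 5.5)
The plan is to evaluate Definition~\ref{def:equivariant-kl-z-polynomials}(iii) for $T_n$ with the $\Bn$-action, using the two families of flat orbits from Lemma~\ref{lem:stabilizers}. I will compute the Frobenius characteristic of each orbit contribution and then sum over $n$ with weight $u^{n+1}$. By Lemma~\ref{lem:stabilizers}, each $\Bn$-orbit of flats is represented exactly once, either by $F^{\mathrm I}_k$ for some $0\le k\le n$ or by $F^{\mathrm{II}}_k$ for some $0\le k\le n$. So
\[
\chB\bigl(Z^{\Bn}_{T_n}(t)\bigr)=\sum_{k=0}^n t^{k}\,\chB\bigl(\Ind P_{T_n/F^{\mathrm I}_k}\bigr)+\sum_{k=0}^n t^{k+1}\,\chB\bigl(\Ind P_{T_n/F^{\mathrm{II}}_k}\bigr).
\]

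\emph{Type (a) orbits.} Contracting $a_1,\dots,a_k$ identifies each vertex $i\le k$ with $A$. This turns each $b_i$ ($i\le k$) into an element parallel to $e_*$, and leaves the remaining spikes $i>k$ intact. Hence the simple matroid underlying $T_n/F^{\mathrm I}_k$ is $T_{n-k}$, with $e_*$ replaced by a parallel class $\{e_*,b_1,\dots,b_k\}$. Its lattice of flats is therefore $\cL(T_{n-k})$. On this lattice the factor $\Sg{k}$ of the stabilizer $\Sg{k}\times\B{n-k}$ acts trivially, since it only permutes elements within one parallel class, and $\B{n-k}$ acts as in Proposition~\ref{prop:aut}.

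Since the invariants depend only on the action on $\cL(\M)$, the uniqueness in Definition~\ref{def:equivariant-kl-z-polynomials} gives
\[
P=\one_{\Sg{k}}\boxtimes P^{\B{n-k}}_{T_{n-k}}(t),
\]
by the same argument used for $C_n$ in Proposition~\ref{prop:ZC-in-terms-of-PhiC}. Lemma~\ref{lem:ind-sk-bn-k} then gives the contribution $t^k h_k[X+Y]\,\mathsf P_{n-k}(X,Y;t)$. Writing $t^ku^{n+1}=(tu)^k u^{(n-k)+1}$ and summing over $n\ge0$ and $0\le k\le n$ produces $H_{X+Y}(tu)\Phi_T(t,u)$.

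\emph{Type (b) orbits.} Contracting $F^{\mathrm{II}}_k$ merges $A$, $B$ and the vertices $1,\dots,k$ into a single vertex. Each spike $i>k$ then becomes a pair of parallel edges $\{a_i,b_i\}$ joining this vertex to vertex $i$. The contraction is thus a rank-$(n-k)$ matroid with $n-k$ parallel classes of size two, so its lattice of flats is Boolean. The factor $\B{k}$ acts trivially on it. Lemma~\ref{lem:boolean} gives the trivial representation of $\B{k}\times\B{n-k}$, and \eqref{eq:frob-bn-ind} turns the induced representation into $h_k[X]h_{n-k}[X]$. With weight $t^{k+1}u^{n+1}=tu\,(tu)^k u^{n-k}$, the sum over $n$ and $k$ is $tu\,H_X(tu)H_X(u)$. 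Adding the two families yields \eqref{eq:ZT-PhiT}.

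The small cases $n=0,1$ need a separate check, since Proposition~\ref{prop:aut} assumes $n\ge2$. For $n=0$ the only flats are $\varnothing$ (type (a), $k=0$) and $\{e_*\}$ (type (b), $k=0$). Both are consistent with the formulas above, because $\B{0}$ and $\B{1}$ still act as described.

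The main obstacle is making rigorous the reduction "$P$ of a contraction with a stabilizer factor acting trivially on the lattice equals trivial $\boxtimes$ the smaller equivariant polynomial". I would justify it in two steps. First, $\Sg{k}$ (respectively $\B{k}$) acts trivially on the lattice of flats of the contraction. Second, the family $\one\boxtimes P^{W'}$ satisfies the three conditions of Definition~\ref{def:equivariant-kl-z-polynomials} for the product group, so uniqueness applies. In the type (a) case I will also record that the lattice isomorphism with $\cL(T_{n-k})$ is $\B{n-k}$-equivariant.
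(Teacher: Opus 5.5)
Your proposal is correct and follows essentially the same route as the paper. It uses the same two orbit families from Lemma~\ref{lem:stabilizers}, the same identification of $T_n/F^{\mathrm I}_k$ with $T_{n-k}$ up to a parallel class and of $T_n/F^{\mathrm{II}}_k$ as Boolean, the same reduction $P=\one\boxtimes P$ via the trivially acting stabilizer factor, and the same Frobenius and generating-function bookkeeping. Your separate check for $n=0,1$ is harmless but unnecessary, since neither Lemma~\ref{lem:stabilizers} nor this computation uses that $\Bn$ is the full automorphism group.
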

	\begin{proof}
		Write $Z_n^B(t):=\chB(Z_{T_n}^{\Bn}(t))\in \Sym[X,Y][t]$ and set $\mathcal{Z}_T(t,u)=\sum_{n\ge 0} Z_n^B(t)\,u^{n+1}$.
		By Definition~\ref{def:equivariant-kl-z-polynomials}(iii),
		\[
		Z_n^B(t)=\sum_{[F]\in \cL(T_n)/\Bn} t^{\rk(F)}\,\chB\!\left(\Ind_{(\Bn)_F}^{\Bn} P_{T_n/F}^{(\Bn)_F}(t)\right).
		\]
		We compute the right side by the two orbit families from Lemma~\ref{lem:stabilizers}.
		
		\textbf{Type I orbits ($e_*\notin F$).}
		Fix $k\in\{0,1,\dots,n\}$ and take the representative $F_k^{\mathrm{I}}=\{a_1,\dots,a_k\}$.
		Contracting $a_1,\dots,a_k$ identifies each vertex $i\le k$ with the vertex $A$.
		For each $i\le k$, the remaining edge $b_i$ becomes an edge between $B$ and $A$, hence an element parallel to $e_*$.
		The remaining $n-k$ spikes keep the same thagomizer shape.
		Therefore $T_n/F_k^{\mathrm{I}}$ is obtained from $T_{n-k}$ by replacing the spine element by a parallel class of size $k+1$.
		Deleting the $k$ additional parallel elements recovers~$T_{n-k}$.
		Since the induced $\B{n-k}$-action fixes these extra elements and the lattice of flats is unchanged (only the \emph{size} of one rank-one flat differs), we have
		\begin{equation}\label{eq:typeI-parallel-invariant}
			P_{T_n/F_k^{\mathrm{I}}}^{\B{n-k}}(t)=P_{T_{n-k}}^{\B{n-k}}(t).
		\end{equation}
		
		By Lemma~\ref{lem:stabilizers}(a), the stabilizer is $(\Bn)_{F_k^{\mathrm{I}}}\cong \Sg{k}\times \B{n-k}$.
		The $\Sg{k}$ factor permutes the $k$ parallel elements and fixes all other elements, hence acts trivially on $\cL(T_n/F_k^{\mathrm{I}})$.
		Therefore the defining properties in Definition~\ref{def:equivariant-kl-z-polynomials} imply
		\[
		P_{T_n/F_k^{\mathrm{I}}}^{\Sg{k}\times \B{n-k}}(t)
		=
		\one_{\Sg{k}}\boxtimes P_{T_n/F_k^{\mathrm{I}}}^{\B{n-k}}(t).
		\]
		Combining this with~\eqref{eq:typeI-parallel-invariant}, we use Lemma~\ref{lem:ind-sk-bn-k} to compute the induced Frobenius characteristic:
		\[
		\chB\!\left(\Ind_{\Sg{k}\times \B{n-k}}^{\Bn}(\one_{\Sg{k}}\boxtimes P_{T_{n-k}}^{\B{n-k}}(t))\right)
		=h_k[X+Y]\;\mathsf{P}_{n-k}(X,Y;t).
		\]
		Multiplying by the weight $t^{\rk(F_k^{\mathrm{I}})}=t^k$ shows that the Type~I contribution to $Z_n^B(t)$ is
		\[
		\sum_{k=0}^{n} t^k\,h_k[X+Y]\;\mathsf{P}_{n-k}(X,Y;t).
		\]
		Passing to generating series, we compute
		\begin{align*}
			&\sum_{n\ge 0}\Bigl(\sum_{k=0}^{n} t^k\,h_k[X+Y]\;\mathsf{P}_{n-k}(X,Y;t)\Bigr)u^{n+1}\\
			= & \sum_{k\ge 0} t^k\,h_k[X+Y]\sum_{m\ge 0}\mathsf{P}_m(X,Y;t)\,u^{m+k+1}
			\qquad(m:=n-k)\\
			= & \left(\sum_{k\ge 0} h_k[X+Y]\,(tu)^k\right)\left(\sum_{m\ge 0}\mathsf{P}_m(X,Y;t)\,u^{m+1}\right)\\
			= & H_{X+Y}(tu)\,\Phi_T(t,u).
		\end{align*}
		
		\textbf{Type II orbits ($e_*\in F$).}
		Fix $k\in\{0,1,\dots,n\}$ and take the representative $F_k^{\mathrm{II}}=\{e_*\}\cup\{a_1,b_1,\dots,a_k,b_k\}$.
		Contracting $F_k^{\mathrm{II}}$ identifies $A$ with $B$ (because $e_*$ is contracted) and also contracts all edges in the first $k$ spikes.
		For each remaining spike $i>k$, the edges $a_i$ and $b_i$ become a parallel pair between the identified vertex and $i$.
		Hence $T_n/F_k^{\mathrm{II}}$ is a direct sum of $n-k$ rank-one matroids, so its lattice of flats is Boolean of rank $n-k$.
		By Lemma~\ref{lem:boolean},
		\[
		P_{T_n/F_k^{\mathrm{II}}}^{\B{k}\times \B{n-k}}(t)=\one_{\B{k}}\boxtimes \one_{\B{n-k}},
		\]
		where we use Lemma~\ref{lem:stabilizers}(b) to identify $(\Bn)_{F_k^{\mathrm{II}}}\cong \B{k}\times \B{n-k}$.
		Equation~\eqref{eq:frob-bn-ind} leads to
		\[
		\chB\!\left(\Ind_{\B{k}\times \B{n-k}}^{\Bn}(\one_{\B{k}}\boxtimes \one_{\B{n-k}})\right)
		=h_k[X]\,h_{n-k}[X].
		\]
		Multiplying by $t^{\rk(F_k^{\mathrm{II}})}=t^{k+1}$ gives the Type~II contribution
		\(
		\sum_{k=0}^{n} t^{k+1}\,h_k[X]\,h_{n-k}[X]
		\)
		to $Z_n^B(t)$.
		Passing to generating series, we compute
		\begin{align*}
			&\sum_{n\ge 0}\Bigl(\sum_{k=0}^{n} t^{k+1}\,h_k[X]\,h_{n-k}[X]\Bigr)u^{n+1}\\
			=& t u\sum_{k\ge 0} t^k h_k[X]\,u^k \sum_{m\ge 0} h_m[X]\,u^m
			\qquad(m:=n-k)\\
			=& t u\left(\sum_{k\ge 0} h_k[X]\,(tu)^k\right)\left(\sum_{m\ge 0} h_m[X]\,u^m\right)\\
			=& t u\,H_X(tu)\,H_X(u).
		\end{align*}
		Adding the two orbit families yields~\eqref{eq:ZT-PhiT}.
	\end{proof}

	We now define the series $\Psi_T$ by the right side of~\eqref{eq:main}.
	Specifically, define
	\[
	R_n(X,Y;t):=h_n[X]+t\sum_{k=2}^{n} h_{n-k}[X]\,\mathsf{C}_k(Y;t),
	\qquad(n\ge 0),
	\]
	and its generating series
	\[
	\Psi_T(t,u):=\sum_{n\ge 0} R_n(X,Y;t)\,u^{n+1}.
	\]
	We will establish the palindromicity of the corresponding thagomizer $Z$-series by relating it to the $Z$-series $\mathcal{Z}_C$ of the graphic matroids of cycle graphs.

	\begin{lem}\label{lem:PhiT-PhiC}
		We have
		\begin{equation}\label{eq:PhiT-PhiC}
			\Psi_T(t,u)=u\,H_X(u)\,\bigl(1+t u\,\Phi_C(t,u)\bigr).
		\end{equation}
	\end{lem}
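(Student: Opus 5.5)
This identity is a direct manipulation of generating series; no matroid input is needed beyond the definitions of $R_n$, $\Psi_T$ and $\Phi_C$. The plan is to expand the right-hand side of~\eqref{eq:PhiT-PhiC} as a power series in $u$ and match the coefficient of $u^{n+1}$ with $R_n(X,Y;t)$ for every $n\ge 0$.

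First split $R_n$ into its two summands. The term $h_n[X]$ contributes $\sum_{n\ge0}h_n[X]u^{n+1}=u\,H_X(u)$, which is the first part of the right side. For the second summand, interchange the sums over $n$ and $k$ and write $n=m+k$ with $m\ge 0$ and $k\ge 2$:
\[
\sum_{n\ge 0}t\sum_{k=2}^{n}h_{n-k}[X]\,\mathsf{C}_k(Y;t)\,u^{n+1}
= t\sum_{m\ge0}h_m[X]u^m\sum_{k\ge2}\mathsf{C}_k(Y;t)\,u^{k+1}.
\]
Since $\sum_{k\ge2}\mathsf{C}_k(Y;t)u^{k+1}=u^2\,\Phi_C(t,u)$ by the definition of $\Phi_C$, which uses the weight $u^{k-1}$, this equals $t u^2 H_X(u)\Phi_C(t,u)=u H_X(u)\cdot tu\,\Phi_C(t,u)$. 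Adding the two parts gives~\eqref{eq:PhiT-PhiC}.

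The only point that needs care is the bookkeeping of exponents: $\Psi_T$ uses $u^{n+1}$ while $\Phi_C$ uses $u^{k-1}$, and these must combine correctly. The convention $\mathsf{C}_0=\mathsf{C}_1=0$ lets the inner sum run over all $k\ge 0$ if convenient. The convention also makes the cases $n=0,1$, where the inner sum is empty, consistent with the formula. All manipulations take place in formal power series in $u$ with coefficients in $\Sym[X,Y][t]$, so the rearrangements are justified.
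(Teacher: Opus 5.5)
Your proposal is correct and follows essentially the same route as the paper: both prove the identity by comparing coefficients of $u^{n+1}$. The paper expands the right-hand side and reads off $R_n$, while you start from $R_n$, reindex with $n=m+k$ and resum. This is the same computation, and your exponent bookkeeping ($u^{k+1}=u^2\cdot u^{k-1}$) checks out.
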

	\begin{proof}
		Expanding the right side gives
		\[
		uH_X(u)=\sum_{n\ge 0} h_n[X]\,u^{n+1},
		\qquad
		uH_X(u)\cdot tu\,\Phi_C(t,u)
		=t\sum_{a\ge 0}\sum_{k\ge 2} h_a[X]\,\mathsf{C}_k(Y;t)\,u^{a+k+1}.
		\]
		Thus the coefficient of $u^{n+1}$ in the second term is
		\(
		t\sum_{k=2}^{n} h_{n-k}[X]\,\mathsf{C}_k(Y;t).
		\)
		Taking into account the contribution of the first term, we obtain $R_n(X,Y;t)$, from which \eqref{eq:PhiT-PhiC} follows.
	\end{proof}
	We next combine Proposition~\ref{prop:ZC-in-terms-of-PhiC} and Lemma~\ref{lem:PhiT-PhiC} to eliminate $\Phi_C$ and obtain an explicit expression for $\Psi_T$, and hence for the thagomizer $Z$-series in terms of $\mathcal{Z}_C$.

	\begin{prop}\label{prop:ZT-ZC-relation}
		In $\Sym[X,Y][t][[u]]$, we have
		\begin{equation}\label{eq:ZT-ZC}
			\Psi_T(t,u)
			=\frac{u\,H_X(u)}{H_Y(tu)}\Bigl(tu\,\mathcal{Z}_C(t,u)+1+h_1[Y]\cdot tu\Bigr).
		\end{equation}
	\end{prop}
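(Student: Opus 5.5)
This is a purely algebraic elimination: I will combine Proposition~\ref{prop:ZC-in-terms-of-PhiC} with Lemma~\ref{lem:PhiT-PhiC}, using that $H_Y(tu)$ is invertible in $\Sym[X,Y][t][[u]]$. The constant term of $H_Y(tu)$ in $u$ is $h_0[Y]=1$, so $1/H_Y(tu)$ is a well-defined power series in $u$ with coefficients in $\Sym[Y][t]$. Division by it is therefore legitimate, and every identity below holds in the ring $\Sym[X,Y][t][[u]]$.

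The first step is to solve \eqref{eq:ZC-PhiC} for $tu\,\Phi_C(t,u)$. Multiplying \eqref{eq:ZC-PhiC} by $tu$ gives
\[
tu\,\mathcal{Z}_C(t,u)=H_Y(tu)-1-h_1[Y]\,tu+H_Y(tu)\,tu\,\Phi_C(t,u).
\]
Rearranging, we get
\[
tu\,\mathcal{Z}_C(t,u)+1+h_1[Y]\,tu=H_Y(tu)\bigl(1+tu\,\Phi_C(t,u)\bigr).
\]
The key observation is that the grouping $1+tu\,\Phi_C$ appearing in Lemma~\ref{lem:PhiT-PhiC} is exactly what emerges here: the term $H_Y(tu)$ on the right absorbs the ``$1$''. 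Dividing by $H_Y(tu)$ yields
\[
1+tu\,\Phi_C(t,u)=\frac{tu\,\mathcal{Z}_C(t,u)+1+h_1[Y]\,tu}{H_Y(tu)}.
\]

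The second step is to substitute this expression into \eqref{eq:PhiT-PhiC}, namely $\Psi_T=u\,H_X(u)\,(1+tu\,\Phi_C)$. This gives \eqref{eq:ZT-ZC} immediately.

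There is no real obstacle in this argument. The only points needing care are checking that the rearrangement reproduces exactly $1+h_1[Y]\,tu$, with the sign of the subtracted terms reversed, and confirming that the division takes place in the power series ring. Both hold because the division is by a unit, and no convergence issue arises since everything is formal in $u$.
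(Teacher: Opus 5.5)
Your proposal is correct and follows essentially the same route as the paper. Like the paper, you multiply \eqref{eq:ZC-PhiC} by $tu$, rearrange to $H_Y(tu)\bigl(1+tu\,\Phi_C(t,u)\bigr)=tu\,\mathcal{Z}_C(t,u)+1+h_1[Y]\cdot tu$, divide by the unit $H_Y(tu)$, and substitute into Lemma~\ref{lem:PhiT-PhiC}.
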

	\begin{proof}
		Combining Lemma~\ref{lem:PhiT-PhiC} with \eqref{eq:ZC-PhiC} multiplied by $tu$ and rearranged, we obtain
		\[
		\Psi_T(t,u)=u\,H_X(u)\bigl(1+tu\,\Phi_C(t,u)\bigr)
		\quad\text{and}\quad
		H_Y(tu)\bigl(1+tu\,\Phi_C(t,u)\bigr)=tu\,\mathcal{Z}_C(t,u)+1+h_1[Y]\cdot tu .
		\]
		Since $H_Y(tu)$ has constant term $1$, it is invertible in $\Sym[X,Y][t][[u]]$. Dividing by $H_Y(tu)$ gives~\eqref{eq:ZT-ZC}.
	\end{proof}
	
	We define
	\[
	\widetilde{\mathcal{Z}}_T(t,u):=H_{X+Y}(tu)\,\Psi_T(t,u)+tu\,H_X(tu)\,H_X(u).
	\]
	Since $\mathcal{Z}_C$ is palindromic by Lemma~\ref{lem:pal-ZC}, Proposition~\ref{prop:ZT-ZC-relation} provides an explicit expression for $\widetilde{\mathcal{Z}}_T$, from which its palindromicity follows.
	
	\begin{cor}\label{cor:palindromicity-transfer}
		The function $\widetilde{\mathcal{Z}}_T$ is palindromic in the sense that
		\[
		\widetilde{\mathcal{Z}}_T(t,u)=\widetilde{\mathcal{Z}}_T(t^{-1},tu).
		\]
	\end{cor}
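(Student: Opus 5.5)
The plan is to write $\widetilde{\mathcal{Z}}_T$ as a single product $u\,H_X(u)\,H_X(tu)\cdot B(t,u)$ whose two factors transform simply under the substitution $(t,u)\mapsto(t^{-1},tu)$, and then read off the invariance from Lemma~\ref{lem:pal-ZC}.

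First I substitute the expression \eqref{eq:ZT-ZC} of Proposition~\ref{prop:ZT-ZC-relation} for $\Psi_T$ into the definition of $\widetilde{\mathcal{Z}}_T$. By \eqref{eq:HXHY}, $H_{X+Y}(tu)=H_X(tu)H_Y(tu)$. The factor $H_Y(tu)$ then cancels the denominator in \eqref{eq:ZT-ZC}; this is legitimate since $H_Y(tu)$ is invertible in $\Sym[X,Y][t][[u]]$. The extra term $tu\,H_X(tu)H_X(u)$ has the same prefactor $u\,H_X(u)H_X(tu)$, contributing $t$ to the bracket. This gives
\[
\widetilde{\mathcal{Z}}_T(t,u)=u\,H_X(u)\,H_X(tu)\,\Bigl(tu\,\mathcal{Z}_C(t,u)+1+t+h_1[Y]\cdot tu\Bigr).
\]

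Next I apply the substitution $t\mapsto t^{-1}$, $u\mapsto tu$. Under it $u\mapsto tu$ and $tu\mapsto u$, so the prefactor $u\,H_X(u)H_X(tu)$ becomes $tu\,H_X(tu)H_X(u)$, which is $t$ times the original prefactor. For the bracket, Lemma~\ref{lem:pal-ZC} gives $\mathcal{Z}_C(t^{-1},tu)=\mathcal{Z}_C(t,u)$, so it becomes
\[
u\,\mathcal{Z}_C(t,u)+1+t^{-1}+h_1[Y]\cdot u=t^{-1}\Bigl(tu\,\mathcal{Z}_C(t,u)+1+t+h_1[Y]\cdot tu\Bigr).
\]
The factors $t$ and $t^{-1}$ cancel, which yields $\widetilde{\mathcal{Z}}_T(t^{-1},tu)=\widetilde{\mathcal{Z}}_T(t,u)$.

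The only point needing care is that the substitution is well defined on the relevant ring. Coefficient-wise, $\mathcal{Z}_C$ and $\widetilde{\mathcal{Z}}_T$ have, at each power $u^{m}$, a Laurent polynomial in $t$. The substitution just multiplies the $u^m$ coefficient by $t^m$ after $t\mapsto t^{-1}$. Hence it is a ring endomorphism of $\Sym[X,Y][t,t^{-1}][[u]]$ and commutes with the products and the inversion of $H_Y(tu)$ used above. No genuine obstacle is expected beyond this bookkeeping.
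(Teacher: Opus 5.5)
Your proposal is correct and matches the paper's proof: you substitute Proposition~\ref{prop:ZT-ZC-relation} and use $H_{X+Y}=H_XH_Y$ to get $\widetilde{\mathcal{Z}}_T=u\,H_X(u)H_X(tu)\bigl(tu\,\mathcal{Z}_C+1+t+h_1[Y]\,tu\bigr)$, then apply Lemma~\ref{lem:pal-ZC} to see that the bracket picks up a factor $t^{-1}$ while the prefactor picks up a factor $t$. Your explicit check of the prefactor's transformation, and of the substitution being a ring endomorphism of $\Sym[X,Y][t,t^{-1}][[u]]$, spells out what the paper compresses into ``the two factors of $t$ cancel.''
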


	\begin{proof}
		By Proposition~\ref{prop:ZT-ZC-relation} and $H_{X+Y}(tu)=H_X(tu)\,H_Y(tu)$, we have
		\[
		H_{X+Y}(tu)\,\Psi_T(t,u)
		=u\,H_X(tu)\,H_X(u)\Bigl(tu\,\mathcal{Z}_C(t,u)+1+h_1[Y]\cdot tu\Bigr).
		\]
		Therefore
		\[
		\widetilde{\mathcal{Z}}_T(t,u)
		=u\,H_X(tu)\,H_X(u)\Bigl(tu\,\mathcal{Z}_C(t,u)+(1+t)+h_1[Y]\cdot tu\Bigr).
		\]
		Under the change of variables $(t,u)\mapsto(t^{-1},tu)$, the expression transforms as follows:
		\[
		\begin{aligned}
			tu\,\mathcal{Z}_C(t,u)+(1+t)+h_1[Y]\cdot tu
			&\mapsto
			u\,\mathcal{Z}_C(t^{-1},tu)+(1+t^{-1})+h_1[Y]\cdot u\\
			&=t^{-1}\Bigl(tu\,\mathcal{Z}_C(t,u)+(1+t)+h_1[Y]\cdot tu\Bigr).
		\end{aligned}
		\]
		The two factors of $t$ cancel, so the product is invariant.
	\end{proof}
	
	\begin{proof}[Proof of Theorem~\ref{thm:main}]
		We prove $\mathsf{P}_n(X,Y;t)=R_n(X,Y;t)$ by induction on $n$.
		For $n=0$, the matroid $T_0$ has Boolean lattice of flats, so $\mathsf{P}_0=1=h_0[X]=R_0$ by Lemma~\ref{lem:boolean}.
		Fix $n\ge 1$ and assume $\mathsf{P}_m=R_m$ for all $m<n$.
		Set
		\[
		\Delta(t,u):=\Phi_T(t,u)-\Psi_T(t,u)=\sum_{m\ge 0}\bigl(\mathsf{P}_m(X,Y;t)-R_m(X,Y;t)\bigr)\,u^{m+1}.
		\]
		By Proposition~\ref{prop:ZT-in-terms-of-PhiT},
		\[
		\mathcal{Z}_T(t,u)-\widetilde{\mathcal{Z}}_T(t,u)=H_{X+Y}(tu)\,\Delta(t,u).
		\]
		By the induction hypothesis, $\Delta(t,u)$ has no terms in degrees $u^1,\dots,u^{n}$.
		Since $H_{X+Y}(tu)$ has constant term $1$, it follows that the coefficient of $u^{n+1}$ in
		$\mathcal{Z}_T-\widetilde{\mathcal{Z}}_T$ equals the coefficient of $u^{n+1}$ in $\Delta$, namely
		\[
		f(t):=\mathsf{P}_n(X,Y;t)-R_n(X,Y;t).
		\]
		Corollary~\ref{cor:palindromicity-transfer} and Definition~\ref{def:equivariant-kl-z-polynomials}(iii) imply that
		$\mathcal{Z}_T-\widetilde{\mathcal{Z}}_T$ is invariant under $(t,u)\mapsto(t^{-1},tu)$.
		Taking coefficients of $u^{n+1}$ gives the ``palindromicity'' relation
		\(
		f(t)=t^{n+1}f(t^{-1}).
		\)
		Equivalently, the set of $t$-degrees appearing in $f$ is symmetric about $\frac{n+1}{2}$: any nonzero term of degree $k$
		is paired with a nonzero term of degree $n+1-k$.
		Let $d=\deg_t f$.
		If $f\neq 0$, then $d$ is paired with $n+1-d$, so $d\ge n+1-d$ and hence $d\ge \frac{n+1}{2}$.
		On the other hand, the Kazhdan--Lusztig degree bound for $\mathsf{P}_n$ and $R_n$ yields
		\[
		d=\deg_t(\mathsf{P}_n-R_n)\le \max\{\deg_t\mathsf{P}_n,\deg_t R_n\}<\frac{n+1}{2}.
		\]
		Therefore $f=0$, i.e.\ $\mathsf{P}_n(X,Y;t)=R_n(X,Y;t)$.
		This completes the induction.
	\end{proof}
	
	\section{Equivariant inverse Kazhdan--Lusztig polynomials}\label{sec:inverse}
	
	In this section we compute the $\Bn$-equivariant inverse Kazhdan--Lusztig polynomial of $T_n$ and thereby prove Theorem~\ref{thm:inverse}.
	Using Proudfoot’s equivariant inversion identity, we express $Q_{T_n}^{\Bn}(t)$ in terms of the $\Sn$-equivariant Kazhdan--Lusztig polynomials of the matroids $C_k$ and rewrite the resulting coefficients via a Pieri-type alternating-sum identity.
	
	For $n\ge 0$, set
	\[
	\mathsf{Q}_n(X,Y;t):=\chB\!\left(Q_{T_n}^{\Bn}(t)\right)\in \Sym[X,Y][t].
	\]
	We work with the equivariant inverse Kazhdan--Lusztig polynomial $Q_\M^W(t)\in \grVRep(W)$ from Proudfoot's equivariant Kazhdan--Lusztig--Stanley theory~\cite{proudfoot2021incidence}.
	For our purposes, it suffices to use the following inversion identity.

	\begin{lem}[\thmcite{proudfoot2021incidence}{Proposition~3.3}]\label{lem:proudfoot-inversion}
		Let $W\curvearrowright \M$ be a loopless equivariant matroid.
		Then
		\begin{equation}\label{eq:proudfoot-inversion}
			\sum_{[F]\in \cL(\M)/W} (-1)^{\rk(\M|_F)}\Ind_{W_F}^{W}\!\left(Q_{\M|_F}^{W_F}(t)\otimes P_{\M/F}^{W_F}(t)\right)=0
		\end{equation}
		in $\grVRep(W)$.
	\end{lem}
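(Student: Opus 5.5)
The plan is to place the identity inside Proudfoot's equivariant incidence algebra of the poset $\cL(\M)$ and derive it from the defining characterizations of $P$ and $Q$ as right and left Kazhdan--Lusztig--Stanley functions. Let $I^W(\cL(\M))$ denote the ring of $W$-equivariant functions $f$ that assign to each interval $[F,G]$ an element $f_{FG}\in\grVRep(W_{F}\cap W_G)$. Multiplication is convolution,
\[
(f\cdot g)_{FH}=\sum_{G} f_{FG}\otimes g_{GH},
\]
where the sum is regrouped into orbits of the stabilizer and each orbit contributes an induced representation. After this regrouping, the left side of \eqref{eq:proudfoot-inversion} is exactly the $(\varnothing,E)$-entry of the product $\widehat{Q}\cdot P$. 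Here $P_{FG}=P^{W_F\cap W_G}_{\M|_G/F}$ and $\widehat{Q}_{FG}=(-1)^{\rk G-\rk F}Q^{W_F\cap W_G}_{\M|_G/F}$. The task is therefore to show that $\widehat{Q}\cdot P=\delta$, the identity of $I^W$. The stated vanishing refers to the off-diagonal entries, so it holds for $\rk(\M)>0$; when $\rk(\M)=0$ the sum is $\one_W$.

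\textbf{Step 1: the characterizations.} Let $\zeta$ be the zeta function, with $\zeta_{FG}=\one$, and let $\chi=\mu\cdot\zeta$-type be the equivariant characteristic function $\kappa$. Let the bar involution act by $t\mapsto t^{-1}$ and multiplication by $t^{\rk G-\rk F}$. The function $P$ is the unique element with $P_{FF}=\one$, degree of $P_{FG}$ strictly less than $\tfrac12(\rk G-\rk F)$ for $F<G$, and $\bar P=\kappa\cdot P$. This is equivalent to Definition~\ref{def:equivariant-kl-z-polynomials}, since $Z=\zeta\cdot P$ (with the $t^{\rk}$ weighting) being palindromic is the same relation. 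Similarly, $Q$ is characterized by $\bar{\widehat{Q}}=\widehat{Q}\cdot\kappa$ together with the same degree bound and normalization. I would also record the basic facts $\bar\kappa=\kappa^{-1}$, $\overline{fg}=\bar f\,\bar g$, and the compatibility of the bar involution with the sign twist $\widehat{\ \cdot\ }$.

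\textbf{Step 2: $\widehat{Q}\cdot P$ is bar-invariant.} Set $f=\widehat{Q}\cdot P$. Applying the characterizations from Step~1,
\[
\bar f=\bar{\widehat{Q}}\,\bar P=\widehat{Q}\,\kappa\,\kappa^{-1}\,P=f .
\]
This requires the correct reading of $\bar P$, which I expect to be $\kappa^{-1}\bar P=P$ in the convention where $\kappa=\bar\zeta^{-1}\zeta$ or the like. I will fix the conventions so that the two occurrences of $\kappa$ cancel. This bookkeeping of conventions, including checking that the sign twist turns the left KLS relation into the right form and that $\bar\kappa=\kappa^{-1}$ holds equivariantly, is the step I expect to be the main obstacle.

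\textbf{Step 3: the degree bound forces $f=\delta$.} We have $f_{FF}=\one$. For $F<G$, each summand $\widehat{Q}_{FH}\otimes P_{HG}$ has degree less than $\tfrac12(\rk H-\rk F)+\tfrac12(\rk G-\rk H)$ whenever at least one of the two intervals is nontrivial. The boundary terms $H=F$ or $H=G$ are also strictly below $\tfrac12(\rk G-\rk F)$. Hence $\deg f_{FG}<\tfrac12(\rk G-\rk F)$. Bar-invariance says $f_{FG}(t)=t^{\rk G-\rk F}f_{FG}(t^{-1})$. A nonzero polynomial with this symmetry has degree at least $\tfrac12(\rk G-\rk F)$, so $f_{FG}=0$. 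This is the same parity argument used in the proof of Theorem~\ref{thm:main}. Taking $F=\varnothing$ and $G=E$ and unfolding the convolution into orbit sums gives \eqref{eq:proudfoot-inversion}.
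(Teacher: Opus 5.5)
Your strategy is the standard proof of the result the paper simply imports from Proudfoot: identify the left side with the $(\varnothing,E)$-entry of $\widehat Q\cdot P$ in the equivariant incidence algebra, show the product is bar-invariant, and kill the off-diagonal entries with the degree bound. Your remark that the identity needs $\rk(\M)>0$ is also correct, and harmless here since $\rk(T_n)=n+1\ge 1$.

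\textbf{The gap.} Step~1 misstates the one substantive input, so Step~2 fails as written. For $\kappa=\chi=\mu\,\overline{\zeta}$ you correctly have $\overline{P}=\chi P$; this is just palindromicity of $Z=\overline{\zeta}P$. But $\widehat Q$ is not the left KLS function of $\chi$. It is the left KLS function of the inverse kernel $\chi^{-1}=\overline{\chi}=\overline{\zeta}^{-1}\zeta$. Concretely, the defining recursion of $Q$ is
\[
(-t)^{\rk\M}Q_\M^W(t^{-1})=\sum_{[F]\in\cL(\M)/W}(-1)^{\rk F}\Ind_{W_F}^W\Bigl(Q_{\M|_F}^{W_F}(t)\otimes t^{\rk(\M/F)}\chi_{\M/F}^{W_F}(t^{-1})\Bigr),
\]
that is, $\overline{\widehat Q}=\widehat Q\,\overline{\chi}$.

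With your version $\overline{\widehat Q}=\widehat Q\chi$, you get $\overline{\widehat QP}=\widehat Q\chi^2P$ and nothing cancels. In fact the conclusion is false for that function. For the rank-one Boolean matroid, the left KLS function of $\chi=t-1$ has $(\varnothing,E)$-entry $c=1$ (from $ct=t-1+c$), so $(\widehat QP)_{\varnothing E}=1+1=2\neq 0$.

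This is not a convention to be fixed at the end: $P$ and $\widehat Q$ must be KLS functions for mutually inverse kernels. Your hedge is also internally inconsistent. With $\kappa=\overline{\zeta}^{-1}\zeta=\chi^{-1}$ one has $\kappa\overline{P}=P$, not $\kappa^{-1}\overline{P}=P$ as you wrote.

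\textbf{The repair.} With the correct characterization, the argument closes along your lines:
\[
\overline{\widehat QP}=\overline{\widehat Q}\,\overline{P}=\widehat Q\,\overline{\chi}\,\chi\,P=\widehat QP.
\]
This uses two facts. First, bar is a ring involution of the equivariant incidence algebra. Second, $\overline{\chi}\chi=\overline{\mu}\,(\zeta\mu)\,\overline{\zeta}=\overline{\mu}\,\overline{\zeta}=\overline{\mu\zeta}=\delta$.

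Your Step~3 then goes through unchanged. Induction and restriction do not raise $t$-degree, and every summand of $(\widehat QP)_{FG}$ has degree $<\tfrac12(\rk G-\rk F)$. This gives $\widehat QP=\delta$, and hence the identity whenever $\rk(\M)>0$.
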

	
	We now specialize~\eqref{eq:proudfoot-inversion} to $W=\Bn$ and $\M=T_n$ and solve for $\mathsf{Q}_n$ in terms of $\mathsf{P}_i$.
	
	\begin{lem}\label{lem:Q-from-P}
		For all $n\ge 0$,
		\begin{equation}\label{eq:Q-from-P}
			\mathsf{Q}_n(X,Y;t)=\sum_{i=0}^{n} (-1)^i\,e_{n-i}[2X+Y]\;\mathsf{P}_i(X,Y;t).
		\end{equation}
	\end{lem}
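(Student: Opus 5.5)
We specialize the inversion identity of Lemma~\ref{lem:proudfoot-inversion} to $W=\Bn$ and $\M=T_n$, sum over the two orbit families of Lemma~\ref{lem:stabilizers}, apply $\chB$, and solve the resulting convolution with generating functions in $u$. For each orbit we need both the restriction $T_n|_F$ and the contraction $T_n/F$, together with how the stabilizer acts on each.

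\emph{Type I orbits} ($F=F_k^{\mathrm{I}}$, stabilizer $\Sg{k}\times\B{n-k}$, $\rk F=k$). The contraction is handled exactly as in the proof of Proposition~\ref{prop:ZT-in-terms-of-PhiT}: $\B{n-k}$ acts through $P_{T_{n-k}}^{\B{n-k}}(t)$ and $\Sg{k}$ acts trivially. The restriction $T_n|_F$ is the Boolean matroid on $\{a_1,\dots,a_k\}$, with $\Sg{k}$ permuting the atoms and $\B{n-k}$ acting trivially.

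The one genuinely new input is the equivariant inverse Kazhdan--Lusztig polynomial of a Boolean matroid. I claim $Q^{\Sg{k}}_{\mathrm{Bool}_k}(t)=\mathrm{sgn}_{\Sg{k}}$, in degree $0$. I expect this to be the main obstacle. I would prove it by induction on $k$ from Lemma~\ref{lem:proudfoot-inversion} itself, using that all contractions of a Boolean matroid have trivial $P$ (Lemma~\ref{lem:boolean}). In Frobenius terms the identity then reads
\[
\sum_{j=0}^{k}(-1)^j\,\ch\bigl(Q_{\mathrm{Bool}_j}\bigr)\,h_{k-j}=0 .
\]
This is precisely the recursion satisfied by $e_j$, namely $\sum_j(-1)^j e_j h_{k-j}=0$. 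The case $k=2$ confirms the claim: $Q=\mathrm{triv}+\mathrm{sgn}-\mathrm{triv}=\mathrm{sgn}$.

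With this in hand, the Type I contribution after applying $\chB$ and Lemma~\ref{lem:ind-sk-bn-k} is
\[
(-1)^k\,e_k[X+Y]\,\mathsf{P}_{n-k}(X,Y;t).
\]
The tensor factorization over the product stabilizer is legitimate because each factor acts trivially on one of the two tensorands.

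\emph{Type II orbits} ($F=F_k^{\mathrm{II}}$, stabilizer $\B{k}\times\B{n-k}$, $\rk F=k+1$). The restriction $T_n|_F$ is $T_k$ with its $\B{k}$-action, and $\B{n-k}$ acts trivially on it. The contraction was shown in the proof of Proposition~\ref{prop:ZT-in-terms-of-PhiT} to be Boolean with $P$ trivial; $\B{n-k}$ permutes its parallel classes and $\B{k}$ acts trivially. Using \eqref{eq:frob-bn-ind}, the Type II contribution is
\[
(-1)^{k+1}\,\mathsf{Q}_k(X,Y;t)\,h_{n-k}[X].
\]

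\emph{Solving for $\mathsf{Q}$.} Summing both families gives, for every $n$,
\[
\sum_{k=0}^{n}(-1)^k e_k[X+Y]\,\mathsf{P}_{n-k}=\sum_{k=0}^{n}(-1)^k\mathsf{Q}_k\,h_{n-k}[X].
\]
Let $\mathcal{P}(u)=\sum_n \mathsf{P}_n u^n$, $\mathcal{Q}(u)=\sum_n\mathsf{Q}_n u^n$, and $E_A(u)=\sum_n e_n[A]u^n$. Multiplying the identity by $(-u)^n$ and summing over $n$ gives
\[
E_{X+Y}(u)\,\mathcal{P}(-u)=\mathcal{Q}(u)\,H_X(-u).
\]
Since $H_X(-u)^{-1}=E_X(u)$, and $E$ is multiplicative in the alphabet (the $e$-analogue of \eqref{eq:HXHY}, which follows from Lemma~\ref{lem:haglund-1}), we obtain
\[
\mathcal{Q}(u)=E_{2X+Y}(u)\,\mathcal{P}(-u).
\]
Extracting the coefficient of $u^n$ yields \eqref{eq:Q-from-P}.
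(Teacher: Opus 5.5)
Your proposal is correct and follows the paper's proof essentially step by step. It uses the same two orbit families, the same Type~I contribution $(-1)^k e_k[X+Y]\,\mathsf{P}_{n-k}$ and Type~II contribution $(-1)^{k+1}\mathsf{Q}_k\,h_{n-k}[X]$, and the same generating-function inversion via $H_X(-u)^{-1}=E_X(u)$ and $E_{X+Y}(u)E_X(u)=E_{2X+Y}(u)$.

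There are two small differences. The paper cites \cite[Theorem~3.4]{gao2022equivariant} for $Q^{\Sg{k}}_{B_k}(t)=\mathrm{sgn}_{\Sg{k}}$, whereas you derive it, correctly, from the inversion identity itself via the recursion $\sum_{j}(-1)^j e_j h_{k-j}=0$. You also sum against $(-u)^n$ rather than $u^n$, which is only a change of normalization.
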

	\begin{proof}
		We apply Lemma~\ref{lem:proudfoot-inversion} to the $\Bn$-equivariant thagomizer matroid $T_n$ and obtain a decomposition of the set of flats into the two $\Bn$-orbit families from Lemma~\ref{lem:stabilizers}.

		\textbf{Type I flats ($e_*\notin F$).}
		Fix $k\in\{0,1,\dots,n\}$ and choose the representative $F_k^{\mathrm{I}}=\{a_1,\dots,a_k\}$.
		Then $T_n|_{F_k^{\mathrm{I}}}$ is a Boolean matroid of rank $k$ with $\Sg{k}$ acting by permutations of the $k$ elements, and $(\Bn)_{F_k^{\mathrm{I}}}\cong \Sg{k}\times \B{n-k}$ by Lemma~\ref{lem:stabilizers}(a).
		Write $B_k$ for the rank-$k$ Boolean matroid.
		The $\B{n-k}$ factor fixes every element of $T_n|_{F_k^{\mathrm{I}}}$, hence acts trivially on $Q_{T_n|_{F_k^{\mathrm{I}}}}^{\Sg{k}\times \B{n-k}}(t)$.
		Moreover, $Q_{B_k}^{\Sg{k}}(t)=\mathrm{sgn}_{\Sg{k}}$ in degree $0$~\cite[Theorem~3.4]{gao2022equivariant}.
		Moreover, since the $\Sg{k}$ factor acts trivially on $\cL(T_n/F_k^{\mathrm{I}})$, the defining properties in Definition~\ref{def:equivariant-kl-z-polynomials} imply
		\(
		P_{T_n/F_k^{\mathrm{I}}}^{\Sg{k}\times \B{n-k}}(t)=\one_{\Sg{k}}\boxtimes P_{T_n/F_k^{\mathrm{I}}}^{\B{n-k}}(t),
		\)
		and combining with~\eqref{eq:typeI-parallel-invariant} gives $P_{T_n/F_k^{\mathrm{I}}}^{\Sg{k}\times \B{n-k}}(t)=\one_{\Sg{k}}\boxtimes P_{T_{n-k}}^{\B{n-k}}(t)$.
		Therefore the Type I contribution to~\eqref{eq:proudfoot-inversion}, after applying $\chB$, is
		\begin{equation}\label{eq:Q-from-P-typeI}
			\sum_{k=0}^{n} (-1)^k\,e_k[X+Y]\;\mathsf{P}_{n-k}(X,Y;t),
		\end{equation}
		using Lemma~\ref{lem:ind-sk-bn-k} with $U=\mathrm{sgn}_{\Sg{k}}$ and $\ch(\mathrm{sgn}_{\Sg{k}})=e_k$.
		
		\textbf{Type II flats ($e_*\in F$).}
		Fix $k\in\{0,1,\dots,n\}$ and choose the representative
		\(
		F_k^{\mathrm{II}}=\{e_*\}\cup\{a_1,b_1,\dots,a_k,b_k\}.
		\)
		Then $T_n|_{F_k^{\mathrm{II}}}$ is canonically isomorphic to $T_k$ with its $\B{k}$-action, and $(\Bn)_{F_k^{\mathrm{II}}}\cong \B{k}\times \B{n-k}$ by Lemma~\ref{lem:stabilizers}(b).
		The factor $\B{n-k}$ fixes every element of this restriction and hence acts trivially on $Q_{T_n|_{F_k^{\mathrm{II}}}}^{\B{k}\times \B{n-k}}(t)$.
		The contraction $T_n/F_k^{\mathrm{II}}$ has a Boolean lattice of flats of rank $n-k$, as in the proof of Proposition~\ref{prop:ZT-in-terms-of-PhiT}, so Lemma~\ref{lem:boolean} gives
		\(
		P_{T_n/F_k^{\mathrm{II}}}^{\B{k}\times \B{n-k}}(t)=\one_{\B{k}}\boxtimes \one_{\B{n-k}}.
		\)
		Induction and Frobenius therefore give the Type~II contribution
		\begin{equation}\label{eq:Q-from-P-typeII}
			\sum_{k=0}^{n} (-1)^{k+1}\,\mathsf{Q}_k(X,Y;t)\,h_{n-k}[X],
		\end{equation}
		where we use $\chB(\one_{\B{n-k}})=h_{n-k}[X]$ and the induction product rule~\eqref{eq:frob-bn-ind}.

		Adding~\eqref{eq:Q-from-P-typeI} and~\eqref{eq:Q-from-P-typeII} and using~\eqref{eq:proudfoot-inversion} yields, for all $n\ge 0$,
		\begin{equation}\label{eq:Q-from-P-convolution}
			\sum_{k=0}^{n} (-1)^k\,\mathsf{Q}_k(X,Y;t)\,h_{n-k}[X]
			=
			\sum_{k=0}^{n} (-1)^k\,e_k[X+Y]\;\mathsf{P}_{n-k}(X,Y;t).
		\end{equation}
		Now multiply~\eqref{eq:Q-from-P-convolution} by $u^n$ and sum over $n\ge 0$.
		Write
		\[
		H_X(u):=\sum_{n\ge 0} h_n[X]\,u^n
		\qquad\text{and}\qquad
		E_A(u):=\sum_{n\ge 0} e_n[A]\,u^n.
		\]
		Then~\eqref{eq:Q-from-P-convolution} becomes
		\begin{equation}\label{eq:Q-from-P-series}
			\left(\sum_{n\ge 0} (-1)^n\mathsf{Q}_n(X,Y;t)\,u^n\right)H_X(u)
			=
			E_{X+Y}(-u)\left(\sum_{n\ge 0}\mathsf{P}_n(X,Y;t)\,u^n\right).
		\end{equation}
		Since $H_X(u)\,E_X(-u)=1$, we have $H_X(u)^{-1}=E_X(-u)$ and hence
		\[
		\sum_{n\ge 0} (-1)^n\mathsf{Q}_n(X,Y;t)\,u^n
		=
		E_{X+Y}(-u)\,E_X(-u)\left(\sum_{n\ge 0}\mathsf{P}_n(X,Y;t)\,u^n\right).
		\]
		Using $E_{X+Y}(-u)\,E_X(-u)=E_{2X+Y}(-u)$, we obtain
		\[
		\sum_{n\ge 0} (-1)^n\mathsf{Q}_n(X,Y;t)\,u^n
		=
		\left(\sum_{n\ge 0} (-1)^n e_n[2X+Y]\,u^n\right)\left(\sum_{n\ge 0}\mathsf{P}_n(X,Y;t)\,u^n\right).
		\]
		Comparing coefficients of $u^n$ gives~\eqref{eq:Q-from-P}.
	\end{proof}
	
	\begin{prop}\label{prop:inverse-cycle-input}
		For all $n\ge 0$,
		\begin{equation}\label{eq:main-inverse}
			\mathsf{Q}_n(X,Y;t)=e_n[X+Y]+t\sum_{k=2}^{n} (-1)^k\,e_{n-k}[X+Y]\,\mathsf{C}_k(Y;t).
		\end{equation}
	\end{prop}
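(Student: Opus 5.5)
Substituting the formula of Theorem~\ref{thm:main}, in the form \eqref{eq:main}, into Lemma~\ref{lem:Q-from-P} and simplifying at the level of generating series reduces the proposition to an alphabet identity.

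\textbf{Step 1: generating series.} Set $\mathcal{Q}(u):=\sum_{n\ge 0}(-1)^n\mathsf{Q}_n(X,Y;t)\,u^n$. From the proof of Lemma~\ref{lem:Q-from-P} we have
\[
\mathcal{Q}(u)=E_{2X+Y}(-u)\sum_{n\ge0}\mathsf{P}_n u^n .
\]
By \eqref{eq:main} (equivalently Lemma~\ref{lem:PhiT-PhiC} divided by $u$),
\[
\sum_{n\ge 0}\mathsf{P}_n u^n=H_X(u)\Bigl(1+t\sum_{k\ge2}\mathsf{C}_k(Y;t)u^k\Bigr).
\]

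\textbf{Step 2: cancel the $X$-alphabet.} Plethystic additivity (Lemma~\ref{lem:haglund-1} in its $e$-version, $E_{A+B}=E_AE_B$) gives $E_{2X+Y}(-u)=E_{X+Y}(-u)\,E_X(-u)$. Since $E_X(-u)H_X(u)=1$, this yields
\[
\mathcal{Q}(u)=E_{X+Y}(-u)\Bigl(1+t\sum_{k\ge 2}\mathsf{C}_k(Y;t)u^k\Bigr).
\]

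\textbf{Step 3: extract coefficients.} Write $E_{X+Y}(-u)=\sum_m(-1)^m e_m[X+Y]u^m$. The coefficient of $u^n$ is
\[
(-1)^n e_n[X+Y]+t\sum_{k=2}^n(-1)^{n-k}e_{n-k}[X+Y]\,\mathsf{C}_k(Y;t).
\]
Multiplying by $(-1)^n$ and using $(-1)^{n}(-1)^{n-k}=(-1)^k$ gives exactly \eqref{eq:main-inverse}. The sign bookkeeping in this step, together with the convention $\mathsf{C}_0=\mathsf{C}_1=0$, is the only place requiring care.

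\textbf{Main obstacle.} The argument is otherwise formal. The one step to justify carefully is the identity $E_{X+Y}(-u)E_X(-u)=E_{2X+Y}(-u)$, already used in Lemma~\ref{lem:Q-from-P}: it is the statement that $e_n$ is additive under plethystic sums, i.e.\ $p_k[2X+Y]=2p_k[X]+p_k[Y]$ combined with the exponential formula for $E_A(u)$. Given that, the proposition follows.
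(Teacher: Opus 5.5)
Your proof is correct and follows essentially the paper's route: substitute \eqref{eq:main} into Lemma~\ref{lem:Q-from-P} and cancel the $X$-alphabet via $E_X(-u)H_X(u)=1$, which is equivalent to the identity $E_{2X+Y}(u)\,H_X(-u)=E_{X+Y}(u)$ used in the paper. The only difference is that you carry out the substitution in one pass at the level of generating series. The paper instead works coefficientwise and applies that identity separately to each of the two resulting sums, the second after interchanging the order of summation.
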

	\begin{proof}
		Substituting~\eqref{eq:main} into~\eqref{eq:Q-from-P}, we get that
		\[
		\mathsf{Q}_n(X,Y;t)=\sum_{i=0}^{n} (-1)^i e_{n-i}[2X+Y]\,h_i[X]
		\;+\;
		t\sum_{i=2}^{n}(-1)^i e_{n-i}[2X+Y]\sum_{k=2}^{i} h_{i-k}[X]\,\mathsf{C}_k(Y;t).
		\]
		For the first sum, multiply by $u^n$ and sum over $n\ge 0$ to obtain $E_{2X+Y}(u)\,H_X(-u)$.
		Since $H_X(-u)\,E_X(u)=1$ and $E_{2X+Y}(u)=E_X(u)^2 E_Y(u)$, we have $E_{2X+Y}(u)\,H_X(-u)=E_X(u)E_Y(u)=E_{X+Y}(u)$.
		Thus
		\[
		\sum_{i=0}^{n} (-1)^i e_{n-i}[2X+Y]\,h_i[X]=e_n[X+Y].
		\]
		For the second sum, we interchange the order of summation by writing $i = k + m$:
		\begin{align*}
			t\sum_{i=2}^{n}(-1)^i e_{n-i}[2X+Y]\sum_{k=2}^{i} h_{i-k}[X]\,\mathsf{C}_k(Y;t)
			&= t\sum_{k=2}^{n} \mathsf{C}_k(Y;t)\sum_{m=0}^{n-k}(-1)^{k+m} e_{n-k-m}[2X+Y]\,h_m[X] \\
			&= t\sum_{k=2}^{n} (-1)^k\,e_{n-k}[X+Y]\;\mathsf{C}_k(Y;t),
		\end{align*}
		where in the last step we used the previous identity with $n$ replaced by $n-k$.
		Combining the two parts yields~\eqref{eq:main-inverse}.
	\end{proof}
	
	To convert the signed formula~\eqref{eq:main-inverse} into the positive induced decomposition in Theorem~\ref{thm:inverse}, we use the following alternating Pieri identity.
	
	\begin{lem}[\thmcite{gao2025thagomizer}{Lemma~2.2}]\label{lem:pieri-alternating}
		For $m\ge 0$ and $k\ge 1$, we have
		\[
		\sum_{j=0}^{m} (-1)^j\,e_{m-j}[Y]\,s_{(j+2,2^{k-1})}[Y]=s_{(2^k,1^m)}[Y].
		\]
	\end{lem}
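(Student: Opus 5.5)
We prove the identity by induction on $m$ with $k\ge 1$ fixed, using the Pieri rule for multiplication by elementary symmetric functions. Everything takes place in the single alphabet $Y$, so we suppress $[Y]$.

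The case $m=0$ is immediate: the sum is the single term $e_0\,s_{(2,2^{k-1})}=s_{(2^k)}$.

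For the inductive step, the cleanest device is to record the left side as the coefficient of $u^m$ in $E(u)\,G(-u)$. Here $E(u)=\sum_j e_j u^j$ and $G(u)=\sum_{j\ge 0} s_{(j+2,2^{k-1})}u^j$. Since $H(u)E(-u)=1$, the claimed family of identities for all $m$ is equivalent to
\[
G(-u)=H(-u)\sum_{m\ge 0} s_{(2^k,1^m)}u^m ,
\]
that is,
\[
s_{(j+2,2^{k-1})}=\sum_{a+b=j} h_a\, s_{(2^k,1^b)}\cdot(\text{sign }(-1)^{j}(-1)^{a}(-1)^b=1).
\]
Equivalently, we must show
\[
\sum_{a=0}^{j} (-1)^{a}\,h_{a}\,s_{(2^k,1^{j-a})}\;=\;(-1)^j s_{(j+2,2^{k-1})}.
\]
One could reach the same point by expanding $e_{m-j}s_{(j+2,2^{k-1})}$ directly, but the generating-function form is easier to control.

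To check this, we expand each product $h_a\,s_{(2^k,1^{b})}$ with the Pieri rule, which adds a horizontal strip of size $a$. The resulting shapes are hooks-plus-columns of the form $(c+2,2^{k-1},1^{d})$, possibly with a third column cell $(\ldots,3,\ldots)$ when $k\ge 2$. The plan is to show that, in the alternating sum over $a$, every shape other than $(j+2,2^{k-1})$ appears exactly twice with opposite signs. The pairing is the standard one: the horizontal strip either does or does not use the cell at the bottom of the first column. A cleaner route is to recognize the whole identity as the Jacobi--Trudi / dual Pieri cancellation $\sum_a(-1)^a h_a e_{b}\cdots$. An alternative, which we expect to be the easiest to write cleanly, is to apply the involution $\omega$. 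This turns the statement into $\sum_j(-1)^j h_{m-j}\, s_{(k,k)+\text{col}}$, and then we induct on $m$ using the recursion $s_{(2^k,1^m)}=e_1 s_{(2^k,1^{m-1})}-s_{(3,2^{k-1},1^{m-1})}-\dots$.

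The main obstacle is the sign-reversing pairing of the Pieri terms. This requires a careful description of which horizontal strips can be added to $(2^k,1^b)$: at most one cell in the second column, at most one in the third column (only when $k\ge 1$ fills column two), plus an arbitrary first-row extension. The boundary cases, namely a strip containing the new cell at the bottom of column one and the smallest shape, need separate checking.
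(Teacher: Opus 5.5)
The paper gives no proof of this lemma; it cites Gao--Li--Xie \cite{gao2025thagomizer}. Your argument is therefore a self-contained replacement, and its core is sound.

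The inversion $E(u)G(-u)=S(u)\iff G(-u)=H(-u)S(u)$ is legitimate. It correctly reduces the claim to $\sum_{a=0}^{j}(-1)^a h_a\,s_{(2^k,1^{j-a})}=(-1)^j s_{(j+2,2^{k-1})}$. Toggling the bottom cell of the first column is then a valid sign-reversing involution. Written precisely:
\begin{itemize}
\item For $\lambda=(2^k,1^b)$ and a horizontal strip $\mu/\lambda$, one has $\ell(\mu)\in\{k+b,\,k+b+1\}$.
\item Send $b\mapsto b+1$ if $\ell(\mu)=k+b+1$, and $b\mapsto b-1$ if $\ell(\mu)=k+b$ and $b\ge 1$. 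This flips the sign $(-1)^{j-b}$.
\item The only unmatched pair is $b=0$, $\ell(\mu)=k$. Since rows $2,\dots,k$ cannot exceed $\lambda_1=2$, all $j$ strip cells lie in row one, forcing $\mu=(j+2,2^{k-1})$.
\end{itemize}
That is the whole proof. The ``boundary cases'' you defer are just this fixed-point check, plus the case $b=0$, $\mu_{k+1}=2$, which pairs normally with $b=1$.

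Fix several slips before writing it out.
\begin{itemize}
\item The parenthetical ``sign $(-1)^j(-1)^a(-1)^b=1$'' is wrong: the sign is $(-1)^{j+a}=(-1)^b$. With all plus signs the identity already fails for $k=j=1$. Your subsequent ``Equivalently'' display is correct.
\item The shapes that occur are $(c+2,2^{k-1},1^d)$ and $(c+2,2^{k},1^d)$, the latter coming from the new cell $(k+1,2)$. No interior row of length $3$ ever appears.
\item Your list of possible strip cells omits the column-one cell, which is exactly the cell your pairing uses.
\item The opening ``induction on $m$'' is unnecessary and should go.
\item The $\omega$/Jacobi--Trudi alternatives are unnecessary and should be dropped. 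The $\omega$ paragraph mixes $\omega$-images with an untransformed recursion.
\end{itemize}

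For comparison, you can skip the inversion entirely. Apply the dual Pieri rule to $e_{m-j}s_{(j+2,2^{k-1})}$ and toggle the last cell of row one: either $(1,j+3)$ lies in the vertical strip, or $(1,j+2)$ is moved into it, which is allowed when $j\ge1$. The only survivor is $j=0$ with the strip filling column one below row $k$, which gives $s_{(2^k,1^m)}$ directly. That route is one step shorter. Yours has the advantage that every horizontal strip lives on a two-column shape and is completely explicit, whereas in the direct route rows $2,\dots,k$ may grow to length $3$.
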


	Applying $\chB$ to Theorem~\ref{thm:inverse}, it suffices to prove the symmetric function identity
	\begin{equation}\label{eq:inverse-induced}
		\mathsf{Q}_n(X,Y;t)
		=
		\sum_{k=0}^{\left\lfloor\frac{n}{2}\right\rfloor}
		\left(\sum_{i=2k}^{n} e_{n-i}[X]\,s_{(2^k,1^{i-2k})}[Y]\right)t^k.
	\end{equation}
	We prove~\eqref{eq:inverse-induced} by rewriting the signed formula~\eqref{eq:main-inverse} as a positive induced decomposition, parallel to the  case of symmetric groups~\cite{gao2025thagomizer}.
	
	\begin{proof}[Proof of Theorem~\ref{thm:inverse}]
		We compute coefficients of $t^k$ using~\eqref{eq:main-inverse} and Theorem~\ref{thm:cycle-input}.
		The case $k=0$ is the Cauchy identity for elementary symmetric functions:
		\[
		[t^0]\mathsf{Q}_n(X,Y;t)=e_n[X+Y]=\sum_{i=0}^{n} e_{n-i}[X]\,e_i[Y],
		\]
		which agrees with~\eqref{eq:inverse-induced} when $k=0$.
		
		Fix $k\ge 1$.
		Expand $\mathsf{C}_m(Y;t)$ using Theorem~\ref{thm:cycle-input} and extract the coefficient of $t^k$ in~\eqref{eq:main-inverse}:
		\begin{align*}
			[t^k]\mathsf{Q}_n(X,Y;t)
			&=\sum_{m=2k}^{n} (-1)^m\,e_{n-m}[X+Y]\cdot s_{(m-2k+2,2^{k-1})}[Y]\\
			&=\sum_{j=0}^{n-2k} (-1)^j\,e_{n-2k-j}[X+Y]\cdot s_{(j+2,2^{k-1})}[Y]
			\qquad(j:=m-2k).
		\end{align*}
		Write $e_{n-2k-j}[X+Y]=\sum_{i=0}^{n-2k-j} e_i[X]\,e_{n-2k-j-i}[Y]$ and interchange the order of summation:
		\begin{align*}
			[t^k]\mathsf{Q}_n(X,Y;t)
			&=\sum_{i=0}^{n-2k} e_i[X]\sum_{j=0}^{n-2k-i} (-1)^j\,e_{n-2k-i-j}[Y]\cdot s_{(j+2,2^{k-1})}[Y]\\
			&=\sum_{i=0}^{n-2k} e_i[X]\cdot s_{(2^k,1^{n-2k-i})}[Y],
		\end{align*}
		by Lemma~\ref{lem:pieri-alternating} (with $m:=n-2k-i$).
		Reindex the final sum by setting $r:=n-2k-i$, and then put $i:=r+2k$ to obtain
		\[
		[t^k]\mathsf{Q}_n(X,Y;t)
		=\sum_{r=0}^{n-2k} e_{n-2k-r}[X]\cdot s_{(2^k,1^{r})}[Y]
		=\sum_{i=2k}^{n} e_{n-i}[X]\cdot s_{(2^k,1^{i-2k})}[Y].
		\]
		This is the coefficient of $t^k$ in~\eqref{eq:inverse-induced}, so~\eqref{eq:inverse-induced} holds.
		The representation-theoretic statement follows by applying $\chB^{-1}$ and the defining property $\chB(V_{\lambda,\mu})=s_\lambda[X]\,s_\mu[Y]$.
	\end{proof}
	
	\section{Induced log-concavity}\label{sec:ilc}
	
	Gao, Li, Xie, Yang and Zhang~\cite{gao2026induced} introduced the notion of \emph{induced log-concavity} for sequences of representations, extending the equivariant log-concavity framework of Gedeon, Proudfoot, and Young~\cite{gedeon2017equivariant}.
	We briefly record how their definition specializes to $\Bn$ and formulate conjectures for the thagomizer family.
	
	\begin{defn}[\thmcite{gao2026induced}{\S1}]\label{def:ilc-bn}
		Fix a finite group $W$ and a sequence $(V_k)_{k\ge 0}$ in $\VRep(W)$.
		Given a finite group $G$ with a subgroup $H \le G$, consider a group homomorphism $\phi \colon H \to W \times W$.
		For $i,j\ge 0$, write $V_i\boxtimes V_j$ for the external tensor product, viewed as a virtual representation of $H$ via pullback along $\phi$.
		\begin{itemize}
			\item 
			We say that $(V_k)_{k\ge 0}$ is \emph{induced log-concave} with respect to $(G,H,\phi)$ if for every $i\ge 1$ the virtual $G$-representation
			\[
			\Ind_{H}^{G}\!\left(V_{i}\boxtimes V_{i}\right)
			\;-\;
			\Ind_{H}^{G}\!\left(V_{i-1}\boxtimes V_{i+1}\right)
			\]
			is honest.
			\item 
			We say that $(V_k)_{k\ge 0}$ is \emph{strongly induced log-concave} with respect to $(G,H,\phi)$ if for every integers $1\le i\le j$ the virtual $G$-representation
			\[
			\Ind_{H}^{G}\!\left(V_{i}\boxtimes V_{j}\right)
			\;-\;
			\Ind_{H}^{G}\!\left(V_{i-1}\boxtimes V_{j+1}\right)
			\]
			is honest.
		\end{itemize}
	\end{defn}
	
	In this paper we only use the following specialization.
	Fix $n\ge 0$ and take $W=\Bn$, $G=\B{2n}$, and $H=\Bn\times \Bn\le \B{2n}$ the standard Young subgroup.
	Let $\phi=\mathrm{id}$, identifying $H$ with $W\times W=\Bn\times \Bn$.
	
	Since $\chB(V_{\lambda,\mu})=s_\lambda[X]\,s_\mu[Y]$ and $\chB$ is an isomorphism, a virtual $\B{2n}$-representation is honest if and only if its Frobenius characteristic is Schur positive in the basis $\{s_\lambda[X]\,s_\mu[Y]\}$.
	Thus, as in~\cite[\S3]{gao2026induced}, Definition~\ref{def:ilc-bn} is equivalent to Schur positivity in $\Sym[X,Y]$ of the differences
	\begin{equation}\label{eq:ilc-schur}
		\mathsf{P}_{n,i}(X,Y)\,\mathsf{P}_{n,j}(X,Y)-\mathsf{P}_{n,i-1}(X,Y)\,\mathsf{P}_{n,j+1}(X,Y),
	\end{equation}
	where $\mathsf{P}_{n,k}(X,Y):=[t^k]\mathsf{P}_n(X,Y;t)$ (and similarly $\mathsf{Q}_{n,k}(X,Y):=[t^k]\mathsf{Q}_n(X,Y;t)$), with the convention that $\mathsf{P}_{n,k}=\mathsf{Q}_{n,k}=0$ for $k>\lfloor n/2\rfloor$.
	The induced log-concavity condition is the special case $i=j$ of~\eqref{eq:ilc-schur}, while the strong condition requires Schur positivity for all $1\le i\le j$.
	This is not automatic, even when each coefficient is an honest representation, so we record it as a separate conjectural positivity property for thagomizers.
	
	There are two complementary ways to analyze Schur positivity of~\eqref{eq:ilc-schur}.
	In the \emph{$Y$-first} view, one expands
	\[
	F_{n,i,j}^{\mathsf{P}}(X,Y)
	:=
	\mathsf{P}_{n,i}(X,Y)\,\mathsf{P}_{n,j}(X,Y)-\mathsf{P}_{n,i-1}(X,Y)\,\mathsf{P}_{n,j+1}(X,Y)
	=\sum_{\mu} g_{n,i,j,\mu}^{\mathsf{P}}(X)\,s_{\mu}[Y],
	\]
	and since $\{s_\lambda[X]\,s_\mu[Y]\}$ is a basis, Schur positivity of $F_{n,i,j}^{\mathsf{P}}(X,Y)$ is equivalent to Schur positivity of each coefficient $g_{n,i,j,\mu}^{\mathsf{P}}(X)\in \Sym[X]$ (and similarly after replacing $\mathsf{P}$ by $\mathsf{Q}$).
	In the \emph{$X$-first} view, Theorems~\ref{thm:main} and~\ref{thm:inverse} express each coefficient $\mathsf{P}_{n,k}(X,Y)$ (respectively $\mathsf{Q}_{n,k}(X,Y)$) as a sum of one-row (respectively one-column) Schur functions in $X$ with coefficients in $\Sym[Y]$, so extracting coefficients in the bases $\{h_a[X]\,h_b[X]\}$ or $\{e_a[X]\,e_b[X]\}$ leads to finer Schur-positivity questions in~$\Sym[Y]$.
	
	\begin{conj}\label{conj:ilc-thagomizer}
		For every $n\ge 0$, the coefficient sequences of $P_{T_n}^{\Bn}(t)$ and $Q_{T_n}^{\Bn}(t)$ are strongly induced log-concave in the sense of Definition~\ref{def:ilc-bn}, equivalently the differences~\eqref{eq:ilc-schur} are Schur positive in $\Sym[X,Y]$, and the same holds after replacing $\mathsf{P}$ by $\mathsf{Q}$.
	\end{conj}
	In particular, this conjecture implies that both sequences are induced log-concave (take $i=j$).
	
	At present we do not know a conceptual proof of Conjecture~\ref{conj:ilc-thagomizer}.
	We verified it for $n\le 10$ by direct computation.
	We hope that the multiplicity-free formulas in Theorems~\ref{thm:main} and~\ref{thm:inverse} will make this conjecture accessible via explicit symmetric-function inequalities.

	\section*{Acknowledgements}
	All authors contributed equally and the names of the authors are listed in alphabetical order according to their family names.
	Matthew H.~Y.~Xie and Michael X.~X.~Zhong were supported by the National Natural Science Foundation of China (Grant No.~12271403).
	Philip B.~Zhang was supported by the National Natural Science Foundation of China (No.~12171362) and Tianjin Natural Science Foundation Project (No.~25JCYBJC00430).


\begin{thebibliography}{10}
		
		\bibitem{athanasiadis2020some}
		C.~A. Athanasiadis, Some applications of {R}ees products of posets to equivariant gamma-positivity, \emph{Algebr. Comb.}, \textbf{3} (2020), 281--300.
		
		\bibitem{braden2020singular}
		T.~Braden, J.~Huh, J.~P. Matherne, N.~Proudfoot, and B.~Wang, Singular Hodge theory for combinatorial geometries, arXiv:2010.06088.
		
		\bibitem{braden2020deletion}
		T.~Braden and A.~Vysogorets, Kazhdan-{L}usztig polynomials of matroids under deletion, \emph{Electron. J. Combin.}, \textbf{27} (2020), P1.17.
		
		\bibitem{elias2016kazhdan}
		B.~Elias, N.~Proudfoot, and M.~Wakefield, The {K}azhdan-{L}usztig polynomial of a matroid, \emph{Adv. Math.}, \textbf{299} (2016), 36--70.
		
		\bibitem{ferroni2025deletion}
		L.~Ferroni, J.~P. Matherne, and L.~Vecchi, Deletion formulas for equivariant {K}azhdan-{L}usztig polynomials of matroids, \emph{SIAM J. Discrete Math.}, \textbf{39} (2025), 848--862.
		
		\bibitem{gao2026induced}
		A.~L.~L. Gao, E.~Y.~H. Li, M.~H.~Y. Xie, A.~L.~B. Yang, and Z.-X. Zhang, Induced log-concavity of equivariant matroid invariants, \emph{J. Comb. Algebra},  (2026), published online first.
		
		\bibitem{gao2025thagomizer}
		A.~L.~L. Gao, Y.~Li, and M.~H.~Y. Xie, Equivariant inverse Kazhdan--Lusztig polynomials of thagomizer matroids, arXiv:2510.11322.
		
		\bibitem{gao2021inverse}
		A.~L.~L. Gao and M.~H.~Y. Xie, The inverse {K}azhdan-{L}usztig polynomial of a matroid, \emph{J. Combin. Theory Ser. B}, \textbf{151} (2021), 375--392.
		
		\bibitem{gao2022equivariant}
		A.~L.~L. Gao, M.~H.~Y. Xie, and A.~L.~B. Yang, The equivariant inverse {K}azhdan-{L}usztig polynomials of uniform matroids, \emph{SIAM J. Discrete Math.}, \textbf{36} (2022), 2553--2569.
		
		\bibitem{gedeon2017equivariant}
		K.~Gedeon, N.~Proudfoot, and B.~Young, The equivariant {K}azhdan-{L}usztig polynomial of a matroid, \emph{J. Combin. Theory Ser. A}, \textbf{150} (2017), 267--294.
		
		\bibitem{gedeon2017thagomizer}
		K.~R. Gedeon, Kazhdan-{L}usztig polynomials of thagomizer matroids, \emph{Electron. J. Combin.}, \textbf{24} (2017), P3.12.
		
		\bibitem{haglund2008q}
		J.~Haglund, \emph{The {$q$},{$t$}-{C}atalan numbers and the space of diagonal harmonics}, vol.~41 of \emph{University Lecture Series}, American Mathematical Society, Providence, RI, 2008.
		
		\bibitem{kazhdan1979representations}
		D.~Kazhdan and G.~Lusztig, Representations of {C}oxeter groups and {H}ecke algebras, \emph{Invent. Math.}, \textbf{53} (1979), 165--184.
		
		\bibitem{macdonald1980polynomial}
		I.~G. Macdonald, Polynomial functors and wreath products, \emph{J. Pure Appl. Algebra}, \textbf{18} (1980), 173--204.
		
		\bibitem{macdonald1995symmetric}
		I.~G. Macdonald, \emph{Symmetric functions and {H}all polynomials}, Oxford Mathematical Monographs, The Clarendon Press, Oxford University Press, New York, 1995, 2nd ed.
		
		\bibitem{matherne2023equivariant}
		J.~P. Matherne, D.~Miyata, N.~Proudfoot, and E.~Ramos, Equivariant log concavity and representation stability, \emph{Int. Math. Res. Not. IMRN}, (2023), 3885--3906.
		
		\bibitem{mendes2004lambdaring}
		A.~Mendes, J.~Remmel, and J.~Wagner, A {$\lambda$}-ring {F}robenius characteristic for {$G\wr S_n$}, \emph{Electron. J. Combin.}, \textbf{11} (2004), R56.
		
		\bibitem{proudfoot2019quniform}
		N.~Proudfoot, Equivariant {K}azhdan-{L}usztig polynomials of {$q$}-uniform matroids, \emph{Algebr. Comb.}, \textbf{2} (2019), 613--619.
		
		\bibitem{proudfoot2021incidence}
		N.~Proudfoot, Equivariant incidence algebras and equivariant {K}azhdan-{L}usztig-{S}tanley theory, \emph{Algebr. Comb.}, \textbf{4} (2021), 675--681.
		
		\bibitem{proudfoot2016intersection}
		N.~Proudfoot, M.~Wakefield, and B.~Young, Intersection cohomology of the symmetric reciprocal plane, \emph{J. Algebraic Combin.}, \textbf{43} (2016), 129--138.
		
		\bibitem{proudfoot2018zpolynomial}
		N.~Proudfoot, Y.~Xu, and B.~Young, The {$Z$}-polynomial of a matroid, \emph{Electron. J. Combin.}, \textbf{25} (2018), P1.26.
		
		\bibitem{xie2019thagomizer}
		M.~H.~Y. Xie and P.~B. Zhang, Equivariant {K}azhdan-{L}usztig polynomials of thagomizer matroids, \emph{Proc. Amer. Math. Soc.}, \textbf{147} (2019), 4687--4695.
		
	\end{thebibliography}

	\appendix

	\section*{Appendix: the \texorpdfstring{$\Bn$}{Bn}-equivariant characteristic polynomial of \texorpdfstring{$T_n$}{Tn}}\label{app:charpoly}
	
	This appendix records a closed formula for the $\Bn$-equivariant characteristic polynomial of $T_n$ in wreath product Frobenius characteristic.
	It is logically independent of the Kazhdan--Lusztig argument in Section~\ref{sec:z-comparison}; in particular, it is not used in the proof of Theorem~\ref{thm:main}.
	However, the characteristic polynomial can also be used as input for alternative approaches to equivariant Kazhdan--Lusztig polynomials.
	For example, one could attempt to adapt the method of Xie--Zhang~\cite{xie2019thagomizer} to the $\Bn$-equivariant setting; we leave such an adaptation to interested readers.
	
	\medskip
	
	We begin by recalling the definition of equivariant characteristic polynomials and fixing notation.
	Let $M$ be a rank-$r$ matroid with a finite group action $W\curvearrowright M$.
	Write $\OS_M=\bigoplus_{i=0}^{r}\OS_{M,i}$ for its Orlik--Solomon algebra, viewed as a graded $W$-representation.
	As in~\cite{gedeon2017equivariant}, define the \emph{$W$-equivariant characteristic polynomial} by
	\[
	\chi_M^{W}(t):=\sum_{i=0}^{r}(-1)^i\,t^{\,r-i}\,[\OS_{M,i}]\in \VRep(W)[t].
	\]
	\begin{prop}\label{prop:charpoly}
		For all $n\ge 0$,
		\[
		\chB\!\left(\chi_{T_n}^{\Bn}(t)\right)
		=:\mathsf{\chi}_n(X,Y;t)
		=(t-1)\,h_n\!\big[(t-1)X-Y\big]
		\in \Sym[X,Y][t].
		\]
	\end{prop}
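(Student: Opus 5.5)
The plan is to expand $\chi_{T_n}^{\Bn}(t)$ as a sum over $\Bn$-orbits of flats, in the same way as the $Z$-polynomial computation of Proposition~\ref{prop:ZT-in-terms-of-PhiT}. The input is the equivariant Brieskorn decomposition: as a $W$-representation,
\[
\OS_{M,i}\;\cong\;\bigoplus_{[F]\in\cL(M)/W,\ \rk F=i}\Ind_{W_F}^{W}\OS_{M|_F,\mathrm{top}},
\]
where $W_F$ acts on the top-degree piece of the Orlik--Solomon algebra of the restriction $M|_F$. This gives
\[
\chi_M^W(t)=\sum_{[F]}(-1)^{\rk F}\,t^{\,r-\rk F}\,\Ind_{W_F}^W\OS_{M|_F,\mathrm{top}}.
\]
We then treat the two orbit families of Lemma~\ref{lem:stabilizers} separately.

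\textbf{Type~I flats.} Take $F=F_k^{\mathrm I}$. The restriction is Boolean of rank $k$, and its top OS piece is spanned by $e_{a_1}\cdots e_{a_k}$. On this line $\Sg{k}$ acts by the sign character, because it permutes anticommuting generators, while $\B{n-k}$ acts trivially. By Lemma~\ref{lem:ind-sk-bn-k}, this orbit contributes
\[
(-1)^k\,t^{\,n+1-k}\,e_k[X+Y]\,h_{n-k}[X].
\]

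\textbf{Type~II flats.} Take $F=F_k^{\mathrm{II}}$. The restriction is $T_k$ with its $\B{k}$-action, and $\B{n-k}$ acts trivially. Write $\mathsf O_k:=\chB(\OS_{T_k,\mathrm{top}})$. By \eqref{eq:frob-bn-ind}, this orbit contributes
\[
(-1)^{k+1}\,t^{\,n-k}\,\mathsf O_k\,h_{n-k}[X].
\]

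\textbf{Generating functions.} Multiply by $u^n$ and sum over $n\ge 0$; put $\mathcal O(u):=\sum_k(-1)^k\mathsf O_k u^k$. Since $\sum_k t^{n-k}u^{n-k}h_{n-k}=H_X(tu)$, we get
\[
\sum_{n\ge0}\chB\bigl(\chi^{\Bn}_{T_n}(t)\bigr)u^n
= t\,H_X(tu)\,E_{X+Y}(-u)\;-\;H_X(tu)\,\mathcal O(u).
\]
On the target side we use $p_k[tX]=t^kp_k[X]$, so $H_{tX}(u)=H_X(tu)$, together with Lemma~\ref{lem:haglund-1}. These give
\[
\sum_{n\ge0}(t-1)\,h_n\bigl[(t-1)X-Y\bigr]u^n=(t-1)\,H_X(tu)\,H_X(u)^{-1}E_{X+Y}(-u)\cdot H_X(u).
\]
Here $H_X(u)^{-1}H_X(u)$ is just $1$, so the target series is $(t-1)H_X(tu)E_{X+Y}(-u)$. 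The proposition therefore reduces to the single identity $\mathcal O(u)=E_{X+Y}(-u)$.

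\textbf{Determining $\mathcal O(u)$.} The idea is to avoid computing the top OS of $T_k$ directly and instead use the vanishing $\chi_{T_n}^{\Bn}(1)=0$ in $\VRep(\Bn)$ for every $n\ge0$. This holds because $T_n$ is loopless of positive rank. The element $\omega=\sum_{e}e_e$ is $\Bn$-invariant, and left multiplication by $\omega$ makes $\OS_{T_n}$ an exact complex of $\Bn$-representations. Hence the alternating sum $\sum_i(-1)^i[\OS_{T_n,i}]$ vanishes equivariantly. Setting $t=1$ in the generating function then gives
\[
H_X(u)\bigl(E_{X+Y}(-u)-\mathcal O(u)\bigr)=0,
\]
and $H_X(u)$ is invertible, so $\mathcal O(u)=E_{X+Y}(-u)$. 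Substituting back gives exactly $(t-1)H_X(tu)E_{X+Y}(-u)$, which proves the proposition.

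\textbf{Main obstacle.} The delicate points are equivariant rather than computational. First, the Brieskorn decomposition must be checked to be $W$-equivariant with the induced-representation form stated above. Second, exactness of multiplication by $\omega$ must hold as a complex of $W$-representations, which needs care with signs coming from the anticommuting generators. Third, the sign-character identification on the Boolean top piece has to be justified. I would cite Gedeon--Proudfoot--Young~\cite{gedeon2017equivariant} for the first two and verify the third directly from the definition of the OS algebra.
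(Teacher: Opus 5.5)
Your argument is correct, but it works on the opposite side of the incidence-algebra convolution from the paper.

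The paper expands the contraction identity $\bar\zeta=\zeta*\chi$, i.e.\ $t^{n+1}\one_{\Bn}=\sum_{[F]}\Ind_{(\Bn)_F}^{\Bn}\chi^{(\Bn)_F}_{T_n/F}(t)$. This needs two inputs: the equivariant characteristic polynomial of the Boolean contraction $T_n/F_k^{\mathrm{II}}$ (a sum of parallel pairs), and the fact that the Type~I contraction, with its fattened spine, contributes $\mathsf{\chi}_{n-k}$ with $\Sg{k}$ acting trivially. The resulting recursion is triangular in $\mathsf{\chi}_n$ and is solved at once.

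You instead use the Brieskorn expansion $\chi=\mu*\bar\zeta$ with $\mu_{\varnothing F}=(-1)^{\rk F}\OS_{M|_F,\mathrm{top}}$. Your inputs are therefore the top Orlik--Solomon pieces of the restrictions $B_k$ and $T_k$. The unknown $\mathsf{O}_k$ are then fixed by $\chi^{\Bn}_{T_n}(1)=0$, which is just the equivariant relation $\mu*\zeta=\delta$.

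Your route has three advantages:
\begin{itemize}
\item It starts from the Orlik--Solomon definition of $\chi^W_M$ actually stated in the appendix, so you need no appeal to the identification of Proudfoot's incidence-algebra $\chi$ with it.
\item It never meets non-simple minors, since restrictions of $T_n$ to flats are simple.
\item It gives the by-product $\chB(\OS_{T_k,k+1})=e_k[X+Y]$: the top Orlik--Solomon piece of $T_k$ is $\Ind_{\Sg{k}}^{\B{k}}\mathrm{sgn}$, of dimension $2^k=|\mu(T_k)|$.
\end{itemize}
The paper's route needs no auxiliary unknowns and no acyclicity argument, and it parallels the $Z$-polynomial orbit sum of Proposition~\ref{prop:ZT-in-terms-of-PhiT}.

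Two small remarks. The equivariant exactness you flag is immediate from $\partial\circ(\omega\,\cdot)+(\omega\,\cdot)\circ\partial=|E|\cdot\mathrm{id}$, where $\partial$ is the Orlik--Solomon boundary derivation. Since $\omega$ is fixed by $\Bn$, no sign subtlety arises. Also, the factor $H_X(u)^{-1}\cdots H_X(u)$ in your expansion of the target series is superfluous, though your conclusion $(t-1)H_X(tu)E_{X+Y}(-u)$ is right.
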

	\begin{proof}
		In the equivariant KLS framework of Proudfoot~\cite{proudfoot2021incidence}, there are class functions $\zeta,\chi,\bar\zeta$ on the lattice of flats satisfying a convolution identity $\bar\zeta=\zeta * \chi$.
		For matroids, the function $\chi$ specializes to the equivariant characteristic polynomial, and evaluating $\bar\zeta$ at the minimal element gives a distinguished element $\bar\zeta_M^{W}(t)\in \VRep(W)[t]$.
		
		For the thagomizer family, we will use the following evaluation:
		\begin{equation}\label{eq:barzeta-thag}
			\chB\!\left(\bar\zeta_{T_n}^{\Bn}(t)\right)=t\,h_n[tX].
		\end{equation}
		We will verify that the right-hand side of the flat-orbit sum for $\bar\zeta_{T_n}^{\Bn}(t)$ agrees with~\eqref{eq:barzeta-thag} when $\mathsf{\chi}_n(X,Y;t)$ is given by the closed formula in the proposition.
		Since the recursion is triangular (the term $k=0$ contains $\mathsf{\chi}_n$), this determines $\mathsf{\chi}_n$ uniquely.
		
		For the Boolean matroid $B_m$ of rank $m$ with its $\B{m}$-action, we use the formula
		\begin{equation}\label{eq:boolean-charpoly}
			\chB\!\left(\chi_{B_m}^{\B{m}}(t)\right)=h_m[(t-1)X].
		\end{equation}
		
		Now fix $n$.
		We evaluate the KLS flat-orbit sum for $\bar\zeta_{T_n}^{\Bn}(t)$ by splitting flats into the two $\Bn$-orbit families (containing $e_*$ vs.\ not containing $e_*$).
		
		\textbf{Flats containing $e_*$.}
		Such a flat is determined by a subset $S\subseteq\{1,\dots,n\}$ of size $k$, with stabilizer $\B{k}\times \B{n-k}$ (Lemma~\ref{lem:stabilizers}).
		The contraction $T_n/F$ is Boolean of rank $n-k$ (as in the proof of Proposition~\ref{prop:ZT-in-terms-of-PhiT}), hence contributes $h_{n-k}[(t-1)X]$ by~\eqref{eq:boolean-charpoly}.
		The orbit factor records the choice of $k$ spikes and involves only the $\Sn$-part, hence contributes $h_k[X]$.
		Summing over $k$ gives
		\[
		\sum_{k=0}^{n} h_k[X]\cdot h_{n-k}[(t-1)X]=h_n[tX]
		\]
		by the Cauchy identity.
		
		\textbf{Flats not containing $e_*$.}
		Such a flat is determined by: a subset of $k$ spikes together with a choice of one edge in each selected spike.
		The orbit factor is the signed choice module, giving $h_k[X+Y]$.
		The contraction is $T_{n-k}$, contributing $\mathsf{\chi}_{n-k}(X,Y;t)$.
		Therefore, the total contribution from this orbit family is
		\[
		\sum_{k=0}^{n} h_k[X+Y]\cdot \mathsf{\chi}_{n-k}(X,Y;t).
		\]
		
		Combining the two orbit families and comparing with~\eqref{eq:barzeta-thag} yields the recursion
		\[
		t\,h_n[tX]=h_n[tX]+\sum_{k=0}^{n} h_k[X+Y]\cdot \mathsf{\chi}_{n-k}(X,Y;t).
		\]
		Rearranging and solving for $\mathsf{\chi}_n$ gives
		\[
		\mathsf{\chi}_n(X,Y;t)=(t-1)\,h_n[tX-(X+Y)]=(t-1)\,h_n[(t-1)X-Y],
		\]
		since $tX-(X+Y)=(t-1)X-Y$.
	\end{proof}
	
	It would be interesting to study induced log-concavity for the coefficient sequence of $\chi_{T_n}^{\Bn}(t)$ (in the sense of Section~\ref{sec:ilc}) and, more generally, whether a type-$B$ analogue of Schur log-concavity with respect to the basis $\{s_\lambda[X]\,s_\mu[Y]\}$ has been considered in the literature.
\end{document}